\newtheorem{theorem}{Theorem}
\newtheorem{lemma}[]{Lemma}
\newtheorem{conjecture}{Conjecture}
\newcommand{\set}[1]{\ensuremath{\left\{#1 \right\}}}
\begin{document}

\title{Strong edge coloring of subcubic bipartite graphs}
\author
{
	Borut Lu\v{z}ar\footnotemark[2], Martina Mockov\v{c}iakov\'{a}\footnotemark[1],
	Roman Sot\'{a}k\thanks{Institute of Mathematics, Faculty of Science, Pavol Jozef \v Saf\'arik University, Ko\v sice, Slovakia.			
			E-Mails: \texttt{martina.mockovciakova@upjs.sk, roman.sotak@upjs.sk}},
	Riste \v{S}krekovski\thanks{Faculty of Information Studies, Novo mesto, Slovenia \& Institute of Mathematics, Physics and Mechanics, Ljubljana, Slovenia.  
			E-Mails: \texttt{\{borut.luzar,skrekovski\}@gmail.com}}
}

\maketitle

{
\begin{abstract}
\noindent
	A \textit{strong edge coloring} of a graph $G$ is a proper edge coloring in which each color class is an induced matching of $G$.
	In 1993, Brualdi and Quinn Massey~\cite{BruQui93} proposed a conjecture that every bipartite graph without $4$-cycles and with the maximum 
	degrees of the two partite sets $2$ and $\Delta$ admits a strong edge coloring with at most $\Delta+2$ colors. We prove that this conjecture holds 
	for such graphs with $\Delta=3$. We also confirm the conjecture proposed by Faudree et al.~\cite{FauGyaSchTuz90} for subcubic 
	bipartite graphs.
	
	\bigskip
	NOTE: After publishing this paper on ArXiv, we have been notified that an equivalent result has already been proven by Maydanskiy~\cite{May05}
	in the scope of incidence colorings. Moreover, the latter result has been verified in a little stronger form in~\cite{WuLin08} by Wu and Lin.
\end{abstract}
}

\bigskip
{\noindent\small \textbf{Keywords:} Strong edge coloring, strong chromatic index, subcubic bipartite graph}

\section{Introduction}

A \textit{strong edge coloring} of a graph $G$ is a proper edge coloring in which each color class is an induced matching of $G$; i.e., any two edges 
at distance at most two are assigned distinct colors. The minimum number of colors for which a strong edge coloring of $G$ exists is the 
\textit{strong chromatic index} of $G$, denoted $\chi^\prime_s (G)$.

In 1985, Erd\H{o}s and Ne\v{s}et\v{r}il proposed the following conjecture during a seminar in Prague.
\begin{conjecture}[Erd\H{o}s, Ne\v{s}et\v{r}il, 1985]
	\label{con:basic}
	Let $G$ be a graph with maximum degree $\Delta$. Then
	$$
		\chi_s'(G) \le \left \{ \begin{array}{ll}
								\frac{5}{4} \Delta^2\,, &\quad \Delta \textrm{ is even;} \vspace{0.3cm}\\ 
								\frac{1}{4}(5\Delta^2 - 2\Delta +1)\,, &\quad \Delta \textrm{ is odd.}
								\end{array} \right.
	$$
\end{conjecture}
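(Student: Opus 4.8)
The plan is to work through the standard reformulation in terms of the square of the line graph: a strong edge coloring of $G$ is exactly a proper vertex coloring of $L(G)^2$, the graph on $E(G)$ in which two edges are adjacent precisely when they lie at distance at most two in $G$, so that $\chi_s'(G) = \chi(L(G)^2)$. A direct count shows that an edge $uv$ has at most $2(\Delta-1) + 2(\Delta-1)^2 = 2\Delta^2 - 2\Delta$ edges within distance two of it: the $2(\Delta-1)$ edges sharing an endpoint, plus at most $(\Delta-1)^2$ further edges at each of $u$ and $v$. Greedy coloring of $L(G)^2$ therefore yields the trivial bound $\chi_s'(G) \le 2\Delta^2 - 2\Delta + 1$, and the entire task is to push the leading constant $2$ down to the conjectured $\tfrac{5}{4}$.

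First I would pin down the extremal behavior, both to confirm the target constant and to identify the structure a proof must respect. The balanced blow-up $C_5[\Delta/2]$ for even $\Delta$, obtained by replacing each vertex of a $5$-cycle with an independent set of size $\Delta/2$, has maximum degree $\Delta$, contains $5(\Delta/2)^2 = \tfrac{5}{4}\Delta^2$ edges, and every two of these edges lie at distance at most two; hence they pairwise conflict and $\chi_s'(C_5[\Delta/2]) = \tfrac{5}{4}\Delta^2$. This shows the bound is best possible and signals that any successful argument must extract its savings precisely from the ways in which $G$ differs from a $C_5$-blow-up.

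Next I would try to beat the greedy bound through a probabilistic \emph{naive coloring} scheme combined with the Lov\'asz Local Lemma. Color each edge uniformly at random from a palette of $c\Delta^2$ colors and call an edge bad if it collides with another edge of the same color at distance at most two. The crucial quantity is the local sparsity of $L(G)^2$: because the neighborhood of an edge is far from a clique — many pairs of edges around $uv$ sit at distance greater than two and so may safely reuse colors — a constant fraction of collisions can be absorbed, and an iterative LLL or entropy-compression analysis controls the residual conflicts. Sharpening the sparsity estimate is the natural mechanism for lowering $c$, and this route is what yields unconditional bounds of the form $(2-\varepsilon)\Delta^2$.

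The hard part, and the reason the conjecture remains open, is closing the gap between what local, probabilistic sparsity delivers ($\approx 1.77\Delta^2$ in the best current analyses) and the conjectured $1.25\Delta^2$. The extremal $C_5$-blow-up is itself locally dense in $L(G)^2$, so a purely local argument cannot detect the required savings; a proof seemingly demands a global stability statement to the effect that any graph whose local structure is as unfavorable as $C_5[\Delta/2]$ must essentially \emph{be} a $C_5$-blow-up, while every other graph admits strictly more color reuse. Turning such a dichotomy into a quantitative coloring bound is the central obstacle, and I would not expect the full conjecture to yield to the probabilistic method alone. The realistic near-term program is instead to improve the universal constant and to verify the conjecture on structurally restricted classes — exactly the spirit of the subcubic bipartite case treated in the present paper.
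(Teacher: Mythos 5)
The statement you were asked to prove is Conjecture~\ref{con:basic}, the Erd\H{o}s--Ne\v{s}et\v{r}il conjecture, and the paper states it precisely as a conjecture: it contains no proof of it, and the problem remains open in general (the paper cites Molloy and Reed's $1.998\,\Delta^2$ for large $\Delta$ as the best known bound, and the conjectured value is verified only for $\Delta\le 3$). So there is no proof in the paper to compare yours against, and no correct complete proof could reasonably be expected.

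Judged on its own terms, your writeup is an accurate survey but not a proof, as you yourself acknowledge. What you actually establish is the reformulation $\chi_s'(G)=\chi(L(G)^2)$, the greedy bound $\chi_s'(G)\le 2\Delta^2-2\Delta+1$, and the tightness of $\frac{5}{4}\Delta^2$ via the blow-up $C_5[\Delta/2]$ for even $\Delta$ --- all of this is correct. The naive-coloring-plus-Local-Lemma paragraph, however, is only a gesture at the known route to bounds of the form $(2-\varepsilon)\Delta^2$: it contains no sparsity estimate for the neighborhoods in $L(G)^2$, no statement of the version of the Local Lemma being invoked, and no computation, so even the weaker $(2-\varepsilon)\Delta^2$ is not derived in your text. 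Your final paragraph then concedes that the gap between roughly $1.77\,\Delta^2$ and $1.25\,\Delta^2$ is exactly the open problem. That concession is the honest and correct assessment of the state of the art, but it means the submission is a discussion of the conjecture rather than a proof of it: no step in it improves on $2\Delta^2-2\Delta+1$. If you want to prove something in the spirit of this paper, the tractable targets are the restricted classes it treats (subcubic graphs, bipartite graphs with bounded partite degrees), not the general conjecture.
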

Molloy and Reed~\cite{MolRee97} established currently the best known upper bound for the 
strong chromatic index of graphs with sufficiently large maximum degree.
\begin{theorem}[Molloy, Reed, 1997]
	For every graph $G$ with sufficiently large maximum degree $\Delta$ it holds that
	$$
		\chi_s'(G) \le 1.998\,\Delta^2.
	$$
\end{theorem}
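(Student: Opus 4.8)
The plan is to pass to the square of the line graph. Recall that a strong edge coloring of $G$ is precisely a proper vertex coloring of the graph $H := L(G)^2$ whose vertices are the edges of $G$, two being adjacent exactly when the corresponding edges of $G$ lie within distance two; thus $\chi_s'(G) = \chi(H)$, and it suffices to bound $\chi(H)$. First I would record the trivial degree bound. An edge $e = uv$ lies within distance two of at most $(\deg(u)-1) + (\deg(v)-1)$ edges sharing an endpoint, plus at most $(\deg(u)-1)(\Delta-1) + (\deg(v)-1)(\Delta-1)$ edges incident to a neighbor of $u$ or of $v$; hence $\deg_H(e) \le (\deg(u)+\deg(v)-2)\Delta$ and $\Delta(H) \le 2\Delta^2 - 2\Delta$. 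Greedy coloring already yields $\chi(H) \le 2\Delta^2 - 2\Delta + 1$, so the entire task is to shave a constant fraction of $\Delta^2$ colors off this bound.

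The mechanism for the saving is that the high-degree vertices of $H$ have \emph{locally sparse} neighborhoods, which lets non-adjacent neighbors repeat colors. Fix $e = uv$ and look at two of its $H$-neighbors: an edge $f = ac$ with $a$ a neighbor of $u$, and an edge $g = bd$ with $b$ a neighbor of $v$. Such a ``cross pair'' is \emph{non-}adjacent in $H$ whenever $f$ and $g$ lie at distance greater than two in $G$, i.e. whenever $\{a,c\}$ and $\{b,d\}$ share no vertex and no common neighbor. The key dichotomy is that $\deg_H(e)$ can only approach its maximum $2\Delta^2$ if $u$ and $v$ both have degree near $\Delta$ \emph{and} the $u$-side and $v$-side distance-two edges are almost all distinct and spread out; in that regime almost all of the $\approx\Delta^4$ cross pairs are non-adjacent, so the neighborhood of $e$ in $H$ misses at least $\epsilon\binom{\Delta(H)}{2}$ edges for an absolute constant $\epsilon>0$. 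Conversely, any coincidence that would destroy sparsity (a shared vertex, a $4$-cycle, a common neighbor among the $a,b,c,d$) simultaneously lowers $\deg_H(e)$, so a fixed positive density of non-edges is retained exactly where the degree is largest. I would make this quantitative by a counting argument bounding the number of adjacent cross pairs.

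With local sparsity established I would invoke the probabilistic ``sparse neighborhood'' coloring theorem underlying the Reed--Molloy framework: there is $\gamma = \gamma(\epsilon) > 0$ so that any graph of sufficiently large maximum degree $D$ whose every neighborhood misses at least $\epsilon\binom{D}{2}$ edges admits a proper coloring with at most $(1-\gamma)D$ colors. Applying this with $D = \Delta(H) \le 2\Delta^2$ gives $\chi_s'(G) = \chi(H) \le (1-\gamma)\,2\Delta^2$; for the explicit $\epsilon$ produced by the counting, $\gamma$ can be taken large enough that the bound falls below the threshold, yielding $\chi_s'(G) \le 1.998\,\Delta^2$ once $\Delta$ is large. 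Vertices of $H$ whose degree is \emph{not} near $2\Delta^2$ are handled by the degree bound itself, so the dichotomy of the previous paragraph covers the full range.

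The main obstacle is the coloring theorem itself, whose proof carries essentially all the difficulty. The idea is a single round of uniformly random assignment of colors from a palette of size $(1-\gamma)D$, followed by uncoloring every vertex that clashes with a neighbor; sparsity forces enough neighbors to repeat colors that, with high probability, each vertex retains a strict surplus of free colors in its list. The probability that a given vertex fails to keep this surplus is exponentially small in $D$, and since each such bad event depends only on a bounded-radius neighborhood, the Lovász Local Lemma guarantees a partial coloring in which every uncolored vertex still has spare colors; a second greedy pass then completes it. Calibrating the failure probabilities against the dependency degree in the Local Lemma, and tracking constants tightly enough that the final bound lands at $1.998\,\Delta^2$ rather than merely $(2-o(1))\Delta^2$, is where the real work lies.
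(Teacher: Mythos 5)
The paper does not prove this theorem: it is stated as background, with the proof residing in the cited work of Molloy and Reed [MolRee97], so there is no in-paper proof to compare against. That said, your outline faithfully reproduces the architecture of their actual argument: the reduction $\chi_s'(G)=\chi(L(G)^2)$, the degree bound $\Delta(L(G)^2)\le 2\Delta^2-2\Delta$, the structural claim that neighbourhoods in $L(G)^2$ span at most a $(1-\epsilon)$ fraction of the possible edges, and the semi-random colouring theorem (one round of random assignment, uncolouring of conflicts, concentration plus the Local Lemma, then greedy completion) that converts such local sparsity into a $(1-\gamma)D$ colouring. As a roadmap it is the correct one.

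As a proof, however, it has two genuine gaps, both of which you flag but neither of which you close. First, the sparsity count is not the routine estimate your second paragraph suggests. A single neighbour $f$ of $e$ in $H$ can itself have up to $2\Delta^2$ neighbours in $H$, so the naive bound on the number of adjacent cross pairs is of order $\Delta^2\cdot 2\Delta^2=2\Delta^4$, which exceeds the total number of cross pairs; a straightforward union bound therefore yields no saving at all. The real lemma requires a trade-off argument in which every adjacency among the neighbours of $e$ is charged against a corresponding loss in $\deg_H(e)$, and making that charging scheme airtight is the main combinatorial content of the original paper, not a detail to be filled in later. Second, the sparse-neighbourhood colouring theorem, which you correctly identify as carrying essentially all the difficulty, is invoked rather than proved; without it, and without explicit constants linking the sparsity parameter $\epsilon$ to the savings parameter $\gamma$, there is no way to certify that the final bound lands at $1.998\,\Delta^2$ rather than at some unspecified $(2-o(1))\Delta^2$. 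In short: right strategy, but the two load-bearing lemmas are still owed.
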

Conjecture~\ref{con:basic} was proved for subcubic graphs by Andersen~\cite{And92} and independently by Hor\'{a}k, He and Trotter~\cite{HorHeTro93}.
Some work has been done also for graphs with maximum degree four~\cite{Cra06,Hor90}, but the conjecture is still open even for them.

In 1990, Faudree et al.~\cite{FauGyaSchTuz90} proposed a conjecture for bipartite graphs. 
The complete bipartite graph $K_{\Delta,\Delta}$ shows that the bound, if true, is the best possible.
	\begin{conjecture}[Faudree et al., 1990]
	\label{conj:strongbip}
		Let $G$ be a bipartite graph. Then $\chi^\prime_s(G)\le \Delta^2$.
	\end{conjecture}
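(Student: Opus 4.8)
The plan is to reformulate the problem as a proper vertex coloring of the square of the line graph $L(G)^2$ and to exploit the rigid conflict structure that bipartiteness imposes. Writing the two parts of $G$ as $A$ and $B$, every edge is an ordered pair $uv$ with $u\in A$ and $v\in B$, and two distinct edges $uv$ and $u'v'$ lie at distance at most two (hence must receive different colors) precisely when $u=u'$, or $v=v'$, or $u\sim v'$, or $u'\sim v$; the two cross-adjacencies $u\sim u'$ and $v\sim v'$ are forbidden by bipartiteness. The complete bipartite graph $K_{\Delta,\Delta}$, in which every pair of edges conflicts and so all $\Delta^2$ edges need distinct colors, shows both that the target $\Delta^2$ is sharp and that the extremal instances are the ``locally complete'' ones.

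A first, structural attempt is a two-coordinate (product) coloring. Since $G$ is bipartite, K\"{o}nig's theorem yields a proper edge coloring $\phi\colon E(G)\to\{1,\dots,\Delta\}$, and using $\phi$ as the first coordinate already separates every pair of edges that share an endpoint. It then remains to design a second coordinate $\psi\colon E(G)\to\{1,\dots,\Delta\}$ so that the pair $(\phi,\psi)$ is a strong coloring taking values in $\{1,\dots,\Delta\}^2$, i.e.\ using at most $\Delta^2$ colors. Because edges with different $\phi$-values are automatically distinguished, $\psi$ only has to separate the distance-two conflicts inside each color class $M_c=\phi^{-1}(c)$, which is a matching. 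A direct count shows that within $M_c$ each edge conflicts with at most $2(\Delta-1)$ others, so a greedy choice of $\psi$ on each class costs at most $2\Delta-1$ colors and the whole scheme delivers about $2\Delta^2$ colors — comparable to the generic greedy bound and to the Molloy--Reed estimate, but short of $\Delta^2$.

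The hard part is to bring the second coordinate down from $2\Delta-1$ to $\Delta$, and here lies the genuine difficulty. Encoding the distance-two conflicts inside $M_c$ as a digraph $D_c$ on $M_c$ with an arc $e\to f$ whenever the $A$-endpoint of $e$ is adjacent to the $B$-endpoint of $f$, one sees that $D_c$ has in- and out-degree at most $\Delta-1$, and its underlying graph $H_c$ is exactly what $\psi$ must properly color. Unfortunately $H_c$ need not be $\Delta$-colorable: taking $D_c$ to be a regular tournament on $2\Delta-1$ vertices, which is realizable by a bipartite gadget of maximum degree exactly $\Delta$, makes $H_c=K_{2\Delta-1}$ and forces $2\Delta-1$ colors on a single class. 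Thus no choice of $\psi$ can succeed class-by-class for a fixed $\phi$, and a proof of the full bound must either choose $\phi$ adaptively so that no color class carries such a dense conflict pattern, or abandon the product structure and argue globally.

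Accordingly, the route I would push hardest is the semi-random (``nibble'') method underlying the Molloy--Reed theorem, specialized to the bipartite conflict graph. The feature to exploit is that the conflicting neighborhood of each edge splits into an ``$A$-side'' and a ``$B$-side'' that interact only weakly, so that iteratively coloring a random slice of edges and discarding conflicts should leave, after each round, uncolored edges whose lists of available colors shrink no faster than their conflict degrees. Running this to completion would give $(1+o(1))\Delta^2$; the main obstacle — and the reason the conjecture remains open in general — is eliminating the $o(1)$ term and controlling the tight, $K_{\Delta,\Delta}$-like local configurations, where the probabilistic slack vanishes, presumably by combining the nibble with a finishing or absorbing argument tailored to those extremal neighborhoods.
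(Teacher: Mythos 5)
You have not proved the statement, and neither does the paper: what you were given is Conjecture~\ref{conj:strongbip}, which is \emph{open} --- the paper states explicitly that it is unresolved for $\Delta \ge 4$ and only records partial cases (Steger and Yu for subcubic graphs, Nakprasit for $\Delta_1 = 2$, and the paper's own Theorems~\ref{thm:str:bip23} and~\ref{thm:str:weight} for restricted subcubic bipartite graphs). So there is no ``paper proof'' to match, and your proposal, read as a proof, has a genuine gap exactly where you concede it: the only rigorously completed argument is the two-coordinate scheme, whose honest yield is $\Delta(2\Delta-1) \approx 2\Delta^2$ colors, and the proposed repair via the semi-random nibble is a research program, not an argument. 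Even granting that the nibble could be pushed to $(1+o(1))\Delta^2$ for bipartite graphs (which you assert but do not establish --- Molloy and Reed's $1.998\,\Delta^2$ is the benchmark here, and it is for general graphs with $\Delta$ large), this would still fail the conjecture on two counts: the $o(1)$ term is not zero, and an asymptotic statement says nothing about any fixed $\Delta$, in particular $\Delta = 4$, where the conjecture is already open. The ``finishing or absorbing argument tailored to extremal $K_{\Delta,\Delta}$-like neighborhoods'' that you invoke in the last sentence is precisely the missing idea; naming it does not supply it.

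That said, the partial content is sound and worth keeping straight. Your characterization of distance-two conflicts in a bipartite graph (shared endpoint, or a cross-adjacency $u \sim v'$ or $u' \sim v$, with $u \sim u'$ and $v \sim v'$ excluded by bipartiteness) is correct; the count of at most $2(\Delta - 1)$ conflicts inside a K\H{o}nig color class is correct; and your tournament gadget is a valid obstruction: a regular tournament on $2\Delta - 1$ vertices is realizable as the conflict digraph of a perfect matching in a $\Delta$-regular bipartite graph, forcing $K_{2\Delta-1}$ as the within-class conflict graph and killing any product coloring over a fixed K\H{o}nig coloring. It is also instructive to contrast your direction with what the paper actually does for its provable cases: rather than asymptotics, it uses exact structural decompositions --- Petersen-type factor and matching theorems, contraction of cycle structure, star covers, an explicit repetitive $3$-coloring of cycles, and a final argument that the conflict graph of the uncolored edges is bipartite, hence $2$-colorable. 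That machinery delivers sharp constants ($5$ and $6$) for small degree but has no evident scaling to general $\Delta$, which is complementary to, and no substitute for, the large-$\Delta$ probabilistic route you sketch.
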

\noindent

The authors in \cite{FauGyaSchTuz90} also considered the graphs with all cycle lengths divisible by four. Such graphs are bipartite and none 
of their cycles has a chord. Conjecture~\ref{conj:strongbip} holds for such graphs, furthermore, the authors conjectured that their 
strong chromatic index is bounded by a function linear in $\Delta$.

Regarding the small values of maximum degree $\Delta$, the conjectured bound is trivially true for isolated edges, paths and even cycles. 
In 1993, Steger and Yu~\cite{SteYu93} showed that Conjecture~\ref{conj:strongbip} holds also for subcubic graphs. Note that this conjecture 
is still open for $\Delta\ge 4$.

Conjecture~\ref{conj:strongbip} was generalized by Brualdi and Quinn Massey~\cite{BruQui93} in 1993 as follows.
\begin{conjecture}[Brualdi, Quinn Massey, 1993]
	\label{conj:strongbip2}
		Let $H$  be a bipartite graph with bipartition $X$ and $Y$ such that $\Delta(X) = \Delta_1$ 
		and $\Delta(Y) = \Delta_2$. Then $\chi^\prime_s(G)\le \Delta_1\Delta_2$.
\end{conjecture}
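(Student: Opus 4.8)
\medskip
\noindent\textbf{A strategy toward the conjecture.}
The bound is tight, witnessed by $K_{\Delta_1,\Delta_2}$, in which every two edges lie at distance at most two and so $\chi'_s=\Delta_1\Delta_2$; consequently any proof must exploit global structure and cannot rest on purely local counting. The plan is to assume without loss of generality $\Delta_1\le\Delta_2$, to restrict to connected graphs, and to work on the fixed palette $[\Delta_1]\times[\Delta_2]$ of $\Delta_1\Delta_2$ colors, splitting each color into an $X$-coordinate $\phi\in[\Delta_1]$ and a $Y$-coordinate $\psi\in[\Delta_2]$. Since $\Delta(H)=\Delta_2$, K\H{o}nig's edge-coloring theorem partitions $E(H)$ into $\Delta_2$ matchings $M_1,\dots,M_{\Delta_2}$; reading the matching index as $\psi$ already separates every pair of edges sharing an endpoint, because such edges lie in distinct matchings. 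The entire remaining burden is therefore the distance-two constraints: within each matching $M_j$ we must choose $\phi$ so that any two edges joined by a third edge of $H$ receive different first coordinates.

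Formally, let $C_j$ be the \emph{conflict graph} on vertex set $M_j$ in which $e\sim f$ iff some edge of $H$ joins an endpoint of $e$ to an endpoint of $f$. A strong edge coloring on the product palette then reduces exactly to properly coloring each $C_j$ with $\Delta_1$ colors. A direct count bounds $\Delta(C_j)\le\Delta_1+\Delta_2-2$: a matching edge $uv$ conflicts with at most $\Delta_1-1$ other matching edges through the neighbors of $u$ and at most $\Delta_2-1$ through the neighbors of $v$. Hence a greedy coloring of $C_j$ wants up to $\Delta_1+\Delta_2-1$ colors, far exceeding the $\Delta_1$ available once $\Delta_2$ is large; equivalently, a single uncolored edge can see up to $2\Delta_1\Delta_2-\Delta_1-\Delta_2$ forbidden colors against a budget of only $\Delta_1\Delta_2$. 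Closing this factor-of-two gap is the crux of the whole problem.

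To close it I would attack $H$ as a minimum counterexample (fewest edges), so that $H-e$ admits a strong coloring for every edge $e$ and the task is always to liberate one color for the last edge. Three tools seem necessary together: (i) choosing the K\H{o}nig decomposition \emph{adaptively}, so that each $C_j$ is sparse precisely around the edge being colored; (ii) a Kempe-type recoloring that toggles one coordinate along an alternating path running through the two matchings carrying a repeated color, thereby freeing a pair $(\phi,\psi)$ for $e$; and (iii) a finite list of \emph{reducible configurations} (low-degree vertices, short paths between low-degree vertices, restricted neighborhood patterns) whose presence would already let $H-e$ be extended, and which minimality therefore forbids. The output of this stage would be that a minimum counterexample is highly ``biregular-like'': almost all $X$-vertices have degree $\Delta_1$, almost all $Y$-vertices degree $\Delta_2$, and short connections between low-degree vertices are excluded.

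Finally I would run a discharging argument: assign to each vertex the initial charge $\deg(v)-\Delta_i$ (with $i$ the side of $v$), so the total charge is controlled, prove that none of the reducible configurations occurs, and redistribute charge across edges to force a contradiction with the structural conclusions above. The main obstacle, and the reason the conjecture is still open for $\Delta\ge 4$, is exactly the factor-of-two gap: the distance-two constraints genuinely double the local demand, so no local or greedy scheme can suffice and the recoloring step must be controlled \emph{uniformly in $\Delta_2$}. For unbounded maximum degree this uniform control is where every known method breaks down, and I would expect the strategy to first succeed for small $\Delta_1$ (the cases $\Delta_1\in\{2,3\}$ already established), with the general statement requiring a genuinely new idea for the recoloring step.
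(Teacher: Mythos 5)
Before anything else: the statement you were asked about is a \emph{conjecture}, not a theorem of this paper. The paper itself records that it is open in general (even Conjecture~\ref{conj:strongbip} is open for $\Delta\ge 4$), the only confirmed cases being $\Delta_1=\Delta_2=3$ (Steger and Yu) and $\Delta_1=2$ (Nakprasit; the paper supplies a short alternative proof via Petersen's $2$-factor theorem and the Hanson--Loten--Toft theorem). Your submission, candidly labelled a strategy, does not prove it either, and you say so yourself — so there is no complete argument here to verify, and the ``factor-of-two gap'' you isolate is precisely the open difficulty. What you do establish is correct and worth keeping: tightness via $K_{\Delta_1,\Delta_2}$ (all $\Delta_1\Delta_2$ edges pairwise conflict); the K\H{o}nig decomposition into $\Delta_2$ matchings disposing of the adjacency constraints on the product palette; and the conflict-graph bound $\Delta(C_j)\le \Delta_1+\Delta_2-2$, which cleanly explains why greedy coloring within matchings cannot close the problem. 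One factual slip: only $\Delta_1=2$ is settled for all $\Delta_2$; the case $\Delta_1=3$ is known only when $\Delta_2=3$, so ``the cases $\Delta_1\in\{2,3\}$ already established'' overstates the record.

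Beyond incompleteness, two ingredients of your program are concretely misdirected. First, the discharging step: discharging derives its contradiction from a global counting identity — Euler's formula for planar graphs, or a girth or maximum-average-degree hypothesis — but a general bipartite graph obeys no such sparsity constraint; $K_{\Delta_1,\Delta_2}$ itself is extremal, dense, and contains no sparse reducible configuration, so charges $\deg(v)-\Delta_i$ have no useful global bound to violate. Second, the Kempe-type recoloring: toggling one coordinate along an alternating path is sound for \emph{proper} edge coloring, where conflicts live at shared vertices, but strong-coloring conflicts act at distance two, and a swap inside the chain can create fresh violations one edge outside it; no uniform control of this effect is known, which is exactly why the conjecture resists all local and semi-local schemes. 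If you want a template for what does succeed in special cases, look at the paper's short proof of Theorem~\ref{nakprasit}: embed in a biregular graph, suppress the $2$-vertices, extract a $2$-factor by Theorems~\ref{even2factor} and~\ref{odd2factor}, strongly color the factor with $4$ colors (exploiting that distinct cycles of the factor are far apart in $G$), and induct on $\Delta$ for the remainder. The operative idea there is a \emph{global factor decomposition} peeling off constant-size color budgets per induction step — quite different in spirit from, and more robust than, edge-by-edge recoloring around a minimum counterexample.
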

They proved that Conjecture~\ref{conj:strongbip2} is true for graphs with all cycle lengths divisible by four.
Note that Steger and Yu solved this problem in case when $\Delta_1=\Delta_2=3$.
Brualdi and Quinn Massey~\cite{BruQui93} considered graphs with $\Delta_1=2$ and established Conjecture~\ref{conj:strongbip2} for 
graphs without any $4$-cycles.
\begin{theorem}[Brualdi, Quinn Massey, 1993]
	Let $H$ be a bipartite graph with bipartition $X$ and $Y$ without any $4$-cycle such 
	that $\Delta(X) = 2$ and $\Delta(Y) = \Delta$. Then $\chi^\prime_s(H)\le 2\Delta$.
\end{theorem}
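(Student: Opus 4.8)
The plan is to recast the problem as an \emph{incidence coloring} question on an auxiliary simple graph and to attack that by a palette (system of distinct representatives) argument. First I would suppress every degree-$2$ vertex of $X$: replacing a path $a$--$x$--$b$ (with $x\in X$ of degree $2$ and $a,b\in Y$) by an edge $ab$ produces a graph $G$ on the vertex set $Y$ with $\Delta(G)\le\Delta$. The absence of $4$-cycles in $H$ is exactly the statement that no two vertices of $Y$ share two common neighbours in $X$, i.e.\ that $G$ is \emph{simple}. A degree-$1$ vertex of $X$ gives a pendant edge of $H$, which has at most $2\Delta-2$ constraints and so can be coloured last; setting these aside, I may assume every vertex of $X$ has degree $2$. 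Under this reduction the edges of $H$ are in bijection with the incidences $(v,e)$ of $G$ (an edge of $H$ at $v\in Y$ lying on the edge $e$), and a short case check shows that two edges of $H$ lie at distance at most $2$ precisely when the corresponding incidences are adjacent (they share the vertex $v$, share the edge $e$, or one of them runs along the edge joining their base vertices). Hence $\chi'_s(H)$ equals the incidence chromatic number $\chi_i(G)$, and the theorem becomes: every simple graph $G$ with $\Delta(G)\le\Delta$ admits an incidence coloring with $2\Delta$ colors.

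For the coloring itself I would pass to the following reformulation, which gives the cleanest bookkeeping. To each vertex $v$ assign a \emph{palette} $A_v\subseteq\{1,\dots,2\Delta\}$ with $|A_v|=d(v)$, together with a bijection $\phi_v\colon N(v)\to A_v$ satisfying $\phi_v(u)\notin A_u$ for every neighbour $u$; then colouring the incidence $(v,vu)$ by $\phi_v(u)$ gives a valid incidence coloring, and conversely every incidence coloring arises this way (the at-$v$ constraints are the injectivity of $\phi_v$, while all remaining adjacencies collapse to the single requirement $\phi_v(u)\notin A_u$). By Hall's theorem the bijections exist once the palettes overlap mildly, namely whenever $\bigl|A_v\cap\bigcap_{u\in S}A_u\bigr|\le d(v)-|S|$ for all $S\subseteq N(v)$. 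So the task reduces to choosing $2\Delta$-palettes meeting this overlap condition; I would attempt this by fixing a balanced orientation (or linear order) of $G$ and building the $A_v$ greedily, loosely reserving about half of the color set for incidences pointing \emph{into} $v$ and half for those pointing \emph{out}.

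The main obstacle is that this reservation cannot be made rigidly. A single incidence $(v,vu)$ is adjacent to $2d(v)+d(u)-2$ others, which can reach $3\Delta-2$, so any plain greedy or degeneracy ordering with only $2\Delta$ colors must fail; and a naive split of the palette into two fixed blocks of size $\Delta$ already breaks down, because the incidences pointing into $v$ are themselves out-incidences elsewhere and so re-enter the other block. The feature the argument must exploit is that the incidences pointing into a fixed vertex $v$ form an \emph{independent set} of the conflict graph, whereas the incidences based at $v$ form a clique completely joined to it; locally this ``complete split'' structure needs only $d(v)+1\le\Delta+1$ colors rather than $2d(v)$. Converting this local slack into a global assignment within $2\Delta$ colors is the crux: I expect either to verify the Hall condition for the palette SDR directly, or, in the dense case where the bound $3\Delta-2$ is nearly attained, to supplement the greedy extension with a Kempe-type recoloring that trades an offending in-color for an unused color whose availability is guaranteed precisely by the independence of the reverse incidences.
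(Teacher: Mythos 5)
Your reduction is correct and, in fact, is exactly the lens through which Brualdi and Quinn Massey originally viewed this theorem (and the one the paper's abstract alludes to via Maydanskiy): suppressing the degree-$2$ vertices of $X$ turns $H$ into a simple graph $G$ on $Y$ with $\Delta(G)\le\Delta$ (simplicity being precisely the absence of $4$-cycles), pendant edges have at most $2\Delta-2$ conflicts and can be colored last, and the remaining edges of $H$ are the incidences of $G$ with conflicts matching incidence adjacency. Your palette reformulation and the Hall condition $\bigl|A_v\cap\bigcap_{u\in S}A_u\bigr|\le d(v)-|S|$ are also correct. The problem is what comes after: the theorem has now been restated as ``$\chi_i(G)\le 2\Delta$ for every simple graph of maximum degree $\Delta$,'' and that statement is never proved. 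You yourself record that the plain greedy argument fails (an incidence can have $3\Delta-2$ conflicting incidences), that the rigid in/out split of the palette fails, and you close with ``I expect either to verify the Hall condition \dots or \dots a Kempe-type recoloring.'' That is the entire content of the theorem, left as a hope. Note in particular that for $S=N(v)$ the Hall condition forces $A_v\cap\bigcap_{u\in N(v)}A_u=\emptyset$, and nothing in the proposal shows that palettes with this global overlap pattern exist; the ``local slack'' observation about the split structure of the conflict graph is a heuristic, not an argument.

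For contrast, the paper handles this statement (in the stronger form of Theorem~\ref{nakprasit}, with no $4$-cycle hypothesis) by a short induction on $\Delta$: embed in a biregular graph, suppress the $2$-vertices, extract a $2$-factor of the resulting $\Delta$-regular multigraph (Theorem~\ref{even2factor} or~\ref{odd2factor} after disposing of bridges), strongly color the complement with $2\Delta-4$ colors by induction, and spend $4$ fresh colors on the subdivided $2$-factor, whose distinct cycles are pairwise at distance at least $3$. If you want to salvage your incidence-coloring framework, the same decomposition works there verbatim: the incidences of vertex-disjoint cycles of a $2$-factor of $G$ never conflict, each cycle's incidences need only $4$ colors, and induction on $\Delta$ gives $\chi_i(G)\le 2\Delta$. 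As written, however, the proposal reduces the theorem to an equivalent unproved claim and stops at the acknowledged crux, so it is not a proof.
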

In fact, they conjectured that $2\Delta$ in above theorem can be replaced by $\Delta+2$.
\begin{conjecture}[Brualdi, Quinn Massey, 1993]
	\label{conj:2Delta}	
	Let $H$ be a bipartite graph with bipartition $X$ and $Y$ without any $4$-cycle such 
	that $\Delta(X) = 2$ and $\Delta(Y) = \Delta$. Then $\chi^\prime_s(H)\le \Delta+2$.
\end{conjecture}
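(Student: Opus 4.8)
The plan is to first recast the problem as an incidence-colouring question, which both suggests the method and exposes the difficulty. View every degree-$2$ vertex $x\in X$ with neighbours $y_1,y_2\in Y$ as an edge $y_1y_2$ of an auxiliary graph $G$ on vertex set $Y$, and every degree-$1$ vertex of $X$ as a pendant edge; the no-$4$-cycle hypothesis says exactly that no two vertices of $X$ share both neighbours, so $G$ is \emph{simple} and $\Delta(G)=\Delta(Y)=\Delta$. Each edge $xy$ of $H$ then corresponds to the incidence $(y,e)$ of $G$, where $e$ is the edge of $G$ carrying $x$, and a short case check shows that two edges of $H$ lie at distance at most two if and only if the associated incidences are adjacent in the usual sense (same vertex, same edge, or joined by an edge). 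Hence $\chi'_s(H)=\chi_i(G)$, where $\chi_i$ is the incidence chromatic number, and the desired bound $\chi'_s(H)\le\Delta+2$ becomes the Incidence Colouring Conjecture $\chi_i(G)\le\Delta(G)+2$ of the same authors; this is the equivalence already exploited by Maydanskiy~\cite{May05}.

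On the optimistic reading that the bound holds, the natural attack is a discharging argument on a minimal counterexample. I would take $H$ to minimise the number of edges, establish basic structural restrictions (connectivity and the absence of trivially reducible low-degree configurations), and then rule out the remaining reducible configurations one at a time: delete a well-chosen edge $xy$, colour the smaller graph by minimality, and extend it. The colours forbidden on $xy$ come from the at most $\Delta-1$ edges at $y$, the at most one further edge at $x$, and the edges at distance two beyond these; the role of the no-$4$-cycle hypothesis is to prevent the second neighbourhood from folding back on itself, so that it bounds the number of \emph{distinct} forbidden colours rather than merely the number of forbidden edges. The technical heart would be a precise count of these distinct colours, supplemented by Kempe-type colour swaps to free a colour whenever the naive count is one short; a concluding discharging phase would then show that the excluded configurations cannot cover all of $H$.

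The main obstacle is structural and, in full generality, insurmountable, and the reduction above is what reveals this. A direct count shows that $xy$ can lie at distance at most two from as many as $(\Delta-1)+1+(\Delta-1)+(\Delta-1)=3\Delta-2$ other edges, so the square of the line graph has maximum degree about $3\Delta$ and a greedy colouring needs on the order of $3\Delta$ colours; reaching $\Delta+2$ demands an essentially global, optimal colouring with no slack. Worse, \emph{every} simple graph arises as the auxiliary graph $G$ (subdivide each of its edges once), so the statement for general $\Delta$ is \emph{equivalent} to the Incidence Colouring Conjecture, which is false: Paley graphs satisfy $\chi_i=\Delta+\Omega(\log\Delta)$ by a result of Guiduli, so $\Delta+2$ is unattainable once $\Delta$ is large. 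Consequently the discharging can succeed only for bounded $\Delta$; in particular it should yield the subcubic case $\Delta=3$, where $\Delta+2=5$ matches the known incidence-colouring bound for subcubic graphs, while for large $\Delta$ one must either relax the bound or add hypotheses such as a girth restriction. I therefore expect the achievable theorem to be the case $\Delta=3$ rather than the conjecture as worded.
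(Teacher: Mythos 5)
Your central observation is correct, and it actually goes beyond what the paper says: encoding each $2$-vertex of $X$ as an edge of an auxiliary graph $G$ on $Y$ turns edges of $H$ into incidences of $G$, the no-$4$-cycle hypothesis makes $G$ simple, and the distance-$2$ check you sketch does give $\chi_s'(H)=\chi_i(G)$ when $H$ is the full subdivision of $G$. Since every simple graph arises this way, the conjecture as worded is equivalent to the Brualdi--Quinn Massey incidence coloring conjecture $\chi_i(G)\le\Delta+2$, and that conjecture is indeed false: Guiduli's Paley-graph examples have $\chi_i\ge\Delta+\Omega(\log\Delta)$. So the statement fails for large $\Delta$, and your prediction that only $\Delta=3$ is attainable matches precisely what the paper delivers: Theorem~\ref{thm:str:bip23} proves $\chi_s'(G)\le 5$ for graphs in $B_{2,3}$ without $4$-cycles, and the paper's own note acknowledges the incidence-coloring equivalence only through Maydanskiy's subcubic result~\cite{May05}, never remarking that the general conjecture is refuted. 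On this diagnostic point your proposal is sound and more informative than the paper's framing.

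Where the proposal has a genuine gap is the positive part, which is a programme rather than a proof. Your plan for $\Delta=3$ --- minimal counterexample, unspecified reducible configurations, Kempe-type swaps, a concluding discharging phase --- names no configuration, no rules, and no extension lemma; and your own count (an edge can conflict with up to $3\Delta-2=7$ others while only $5$ colors are available) shows exactly why local extension arguments run short, so the entire burden of the proof sits in the part you leave open. The paper's proof of the $\Delta=3$ case is, moreover, not a discharging argument at all but a global decomposition: in a minimal counterexample it establishes bridgelessness, suppresses the $2$-vertices of $X$ to obtain a bridgeless simple subcubic graph $\hat{G}$ (simplicity is precisely where the no-$4$-cycle hypothesis is used), invokes Petersen's theorem (Lemma~\ref{lem:str:pet}) to extract a matching covering all $3$-vertices so that the remainder is a disjoint union of cycles, organizes those cycles via a star cover of the cycle-contracted multigraph (Lemma~\ref{lem:str:cover}) into central and leaf cycles, colors the corresponding subdivided cycle and star edges of $G$ with the periodic pattern $1,2,3$ while deliberately uncoloring a few edges, and finishes by showing that the conflict graph of the uncolored edges has maximum degree $2$ and no odd cycle, hence is bipartite and colorable with two further colors $4,5$. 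That cycle-based global coloring is the mechanism that circumvents the $3\Delta-2$ obstruction you identified; to turn your proposal into a proof of the $\Delta=3$ case you would need to replace the discharging sketch with a decomposition of this kind (or import Maydanskiy's incidence-coloring argument through your own equivalence).
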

In 2008 Nakprasit~\cite{Nak08} settled Conjecture~\ref{conj:strongbip2} of Brualdi and Quinn Massey for $\Delta_1=2$.
\begin{theorem}[Nakprasit, 2008]
	\label{nakprasit}
	Let $H$ be a bipartite graph with bipartition $X$ and $Y$ such 
	that $\Delta(X) = 2$ and $\Delta(Y) = \Delta$. Then $\chi^\prime_s(G)\le 2\Delta$.
\end{theorem}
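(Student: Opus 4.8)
The plan is to recast the statement as an incidence colouring problem and then colour by cutting $G$ into pieces of maximum degree two. First I would dispose of the vertices of degree $1$ in $X$: if $x\in X$ has degree $1$, then its edge $xy$ lies at distance at most two from at most $\Delta-1$ edges through $y$ and at most $\Delta-1$ second edges at the remaining neighbours of $y$, hence from at most $2\Delta-2$ edges in total, so any strong $2\Delta$-edge-colouring of $H-xy$ extends to $xy$; thus we may assume every vertex of $X$ has degree exactly $2$. Then $H$ is the subdivision of the multigraph $G$ on vertex set $Y$ obtained by replacing each degree-$2$ vertex $x\in X$ with neighbours $y_1,y_2$ by the edge $y_1y_2$, and $\Delta(G)\le\Delta(Y)=\Delta$. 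The two edges of $H$ at $x$ are exactly the two incidences of the edge $y_1y_2$ of $G$, and a direct check shows that two edges of $H$ are at distance at most two precisely when the corresponding incidences of $G$ are adjacent (they share a vertex, lie on a common edge, or the edge of one joins the vertices of both). Consequently $\chi'_s(H)=\chi_i(G)$, the incidence chromatic number, and it suffices to prove $\chi_i(G)\le 2\Delta$.

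For the core I would take an equitable edge-colouring (de Werra's theorem, which holds for multigraphs) splitting $E(G)$ into $\lceil\Delta/2\rceil$ subgraphs $G_1,\dots,G_{\lceil\Delta/2\rceil}$ in which every vertex sees each colour at most $\lceil\Delta/\lceil\Delta/2\rceil\rceil=2$ times; thus each $G_j$ has maximum degree at most two, i.e.\ it is a disjoint union of paths, cycles, and (in the multigraph case) pairs of parallel edges. Assign to each $G_j$ a private palette $P_j$ and colour its incidences by an incidence colouring of $G_j$ using $|P_j|\le 4$ colours, which is possible because paths, cycles and digons all have incidence chromatic number at most $4$. Since the palettes are pairwise disjoint, any two incidences from different subgraphs automatically receive different colours, so no cross-subgraph conflict can arise and the union is a proper incidence colouring of $G$. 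This uses at most $4\lceil\Delta/2\rceil$ colours; when $\Delta$ is even this is exactly $2\Delta$, and the same bound holds with two colours to spare whenever $\Delta(G)\le\Delta-1$.

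The hard part is the single remaining case, $\Delta$ odd with $\Delta(G)=\Delta$, where $4\lceil\Delta/2\rceil=2\Delta+2$ overshoots by two. To recover the two colours I would decompose $E(G)$ unevenly, into $(\Delta-3)/2$ subgraphs of maximum degree two together with one remainder $R$ of maximum degree three; colouring each degree-two part with $4$ colours and $R$ with the subcubic bound of $5$ colours yields $4\cdot\frac{\Delta-3}{2}+5=2\Delta-1\le 2\Delta$. The genuine difficulties, which I expect to be the main obstacle, are twofold: guaranteeing that such a prescribed-degree edge decomposition exists for every multigraph (a bounded-degree decomposition in the spirit of Lov\'asz, where one must check that the parity and Class-$2$ phenomena already forbidding the splitting of a triangle into two matchings do not intervene once all prescribed degrees are at least two), and ensuring the remainder $R$ is free of triple edges so that the subcubic value $5$ really applies (a balanced choice of the decomposition spreads parallel edges across the parts). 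A fallback route is to reduce to the $4$-cycle-free case, where the quoted Brualdi--Quinn Massey theorem already gives the bound, by deleting one of the two twin degree-$2$ vertices of $X$ forming each $4$-cycle and re-inserting it; there the difficulty migrates to the re-insertion when the two common neighbours in $Y$ have large degree, and must be met by recolouring rather than by a naive greedy extension.
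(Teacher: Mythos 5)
Your reduction to incidence colouring is valid, and the even-$\Delta$ half of your argument works and is genuinely different from the paper's route: you split the core multigraph $G$ on $Y$ into $\Delta/2$ parts of maximum degree two by de Werra's equitable edge colouring and give each part a private $4$-colour palette, whereas the paper regularises, proves bridges reducible, and peels off subdivided $2$-factors by induction on $\Delta$, each also costing $4$ colours (the key point there being that distinct cycles of a subdivided $2$-factor lie at distance at least $3$ in $H$). For even $\Delta$ your version avoids both the induction and the connectivity hypotheses.

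The odd case, however, has a genuine gap, and it is exactly the one you flag: the decomposition of an arbitrary multigraph of maximum degree $\Delta=2r+1$ into $r-1$ parts of maximum degree $2$ and one part of maximum degree $3$ does \emph{not} always exist. At a vertex of degree $2r+1$ each degree-two part must take exactly two edges, so when $G$ is $(2r+1)$-regular every degree-two part is forced to be a $2$-factor; and already for $\Delta=5$ there are $5$-regular multigraphs with no $2$-factor. For instance, join a vertex $v$ by five bridges to five copies of the multigraph on $\{x,y,z\}$ with double edges $xy$, $xz$ and a triple edge $yz$ (attaching each bridge at $x$): the result is $5$-regular, a $2$-regular subgraph is a disjoint union of cycles and so avoids every bridge, hence $v$ can never reach degree $2$ in such a subgraph. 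So the obstruction is not the parity/Class-$2$ phenomenon you hope to rule out by keeping all prescribed degrees at least two; it is a connectivity obstruction (cut edges), and this is precisely why the paper first shows bridges are reducible and only then extracts $2$-factors, via Petersen's theorem for even degree and the Hanson--Loten--Toft theorem (which explicitly bounds the number of cut edges) for odd degree. Repairing your argument requires the same bridge reduction -- note that a bridge of $G$ becomes two bridges of $H$ with up to $3\Delta-2$ coloured $2$-neighbours, so extending across it needs a colour-permutation argument, not a greedy one -- together with a regularisation step. Two further loose ends in the same case: the remainder $R$ may contain digons, whose subdivisions are $4$-cycles, so the ``subcubic bound of $5$'' must be the full Maydanskiy/Wu--Lin incidence result rather than the $C_4$-free Theorem~\ref{thm:str:bip23}; and the fallback via deleting and re-inserting twin $2$-vertices is not developed far enough to be checkable.
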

This theorem is, together with the result of Steger and Yu, so far the only known partial confirmation of Conjecture~\ref{conj:strongbip2}.
Here, let us mention that Nakprasit's result follows also from the results of Petersen~\cite{Pet1891} and Hanson, Loten, and Toft~\cite{HanLotTof98}.
\begin{theorem}[Petersen, 1891]
	\label{even2factor}
 Every regular multigraph of positive even degree has a $2$-factor.
\end{theorem}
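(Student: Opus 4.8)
The plan is to convert the search for a $2$-factor into the search for a perfect matching in an auxiliary bipartite graph. Write the common degree as $2k$ with $k\ge 1$, and treat each connected component separately, since a disjoint union of $2$-factors is again a $2$-factor; so I may assume $G$ is connected. As every vertex of $G$ has even degree, $G$ admits a closed Eulerian walk, and traversing it induces an orientation of the edges in which the in-degree equals the out-degree at every vertex. Because the total degree is $2k$, each vertex then has in-degree $k$ and out-degree $k$ in this orientation.

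Next I would build a bipartite multigraph $H$ on the two vertex sets $\{v^{+}:v\in V(G)\}$ and $\{v^{-}:v\in V(G)\}$, placing an edge $u^{+}v^{-}$ for every arc directed from $u$ to $v$ in the oriented $G$. By construction the degree of $u^{+}$ equals the out-degree of $u$ and the degree of $v^{-}$ equals the in-degree of $v$; by the previous step both equal $k$, so $H$ is $k$-regular bipartite.

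A $k$-regular bipartite multigraph has a perfect matching, since Hall's condition holds: any set $S$ of left vertices sends exactly $k\,|S|$ edges into $N(S)$, which can receive at most $k\,|N(S)|$ edges, forcing $|S|\le|N(S)|$. Fix such a perfect matching $M$. Reading $M$ back in $G$, each vertex $u$ is incident to exactly one matched arc leaving it (recorded at $u^{+}$) and exactly one matched arc entering it (recorded at $u^{-}$); ignoring orientations, the chosen edges form a spanning subgraph in which every vertex has degree two, that is, a $2$-factor.

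The step carrying the genuine content is the reduction itself: the passage through an Eulerian walk to a balanced in/out orientation, and then to a $k$-regular bipartite auxiliary graph whose perfect matchings correspond exactly to $2$-factors of $G$. Once this correspondence is set up, the remaining ingredients are the classical facts that connected even multigraphs are Eulerian and that regular bipartite multigraphs have perfect matchings, so nothing downstream requires additional work. The only hypothesis truly used is that every degree is even, which is precisely what guarantees the Eulerian orientation and hence the whole construction.
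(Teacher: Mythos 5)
The paper states this result as a classical theorem of Petersen and offers no proof of its own (it is only quoted as an ingredient in the short proof of Theorem~\ref{nakprasit}), so there is no internal argument to compare yours against. Your proof is correct and is in fact the standard one, essentially Petersen's original argument: reduce to connected components, take an Eulerian orientation so that every vertex has in-degree and out-degree $k$, form the $k$-regular bipartite ``split'' multigraph on $\{v^{+}\}\cup\{v^{-}\}$, extract a perfect matching via Hall's condition (or K\"onig's theorem), and read the matched arcs back as a spanning $2$-regular subgraph. Two small points are worth making explicit but do not affect validity: Hall's theorem is being applied to a bipartite \emph{multigraph}, which is harmless since your double-counting argument works verbatim with multiplicities (or one may pass to the underlying simple graph, whose neighbourhoods coincide); and if the convention for multigraphs permits loops, a loop at $v$ appears as the single edge $v^{+}v^{-}$, and if matched it contributes degree two at $v$, so the conclusion still holds.
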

\begin{theorem}[Hanson, Loten, Toft, 1998] 
	\label{odd2factor}
	For $r,k \in \mathbb{N}$ with $r \ge (3k - 1)/2$, every $(2r + 1)$-regular graph with at most $(2r + 1 - k)/k$ cut-edges has a $2k$-factor.
\end{theorem}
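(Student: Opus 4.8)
\section*{Proof proposal}

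The plan is to apply Tutte's $f$-factor theorem to the constant function $f\equiv 2k$. Since a $2k$-factor of a graph is the union of $2k$-factors of its connected components and the cut-edges are distributed among those components, we may assume $G$ is connected. Tutte's theorem states that $G$ has an $f$-factor if and only if, for every pair of disjoint sets $S,T\subseteq V(G)$,
$$
\eta(S,T):=\sum_{s\in S}f(s)+\sum_{t\in T}\bigl(d_G(t)-f(t)\bigr)-e_G(S,T)-q(S,T)\ge 0,
$$
where $e_G(S,T)$ is the number of edges between $S$ and $T$, and $q(S,T)$ counts the components $C$ of $G-(S\cup T)$ for which $e_G(C,T)+\sum_{v\in C}f(v)$ is odd. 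For $f\equiv 2k$ that last sum is even, so $q(S,T)$ simply counts the components $C$ with $e_G(C,T)$ odd; and since $G$ is $(2r+1)$-regular the criterion becomes
$$
\eta(S,T)=2k\,|S|+(2r+1-2k)\,|T|-e_G(S,T)-q(S,T)\ge 0 .
$$
The parity requirement (that $\sum_v f(v)=2k|V(G)|$ be even) is automatic, so everything reduces to verifying this inequality for all disjoint $S,T$.

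First I would dispose of the easy regime. Counting the edges leaving $T$ gives $e_G(S,T)+\sum_C e_G(C,T)=(2r+1)|T|-2e_G(T)$, and each component counted by $q(S,T)$ sends an odd, hence positive, number of edges into $T$. This yields the crude bound $e_G(S,T)+q(S,T)\le (2r+1)|T|$, whence $\eta(S,T)\ge 2k(|S|-|T|)\ge 0$ whenever $|S|\ge|T|$. Thus only the range $|T|>|S|$ needs work, and there the deficit $2k(|T|-|S|)$ must be recovered from the surplus of edges that the odd components (together with $e_G(T)$) send into $T$.

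The heart of the matter is a sharp estimate of $q(S,T)$ in the range $|T|>|S|$, and this is where both hypotheses enter. I would classify each odd component $C$ of $G-S-T$ by its attachment to $S\cup T$: (i) those with exactly one edge to $T$ and none to $S$ are joined to the rest of $G$ by a single bridge, so their number is at most the cut-edge bound $(2r+1-k)/k$; (ii) those with an edge to $S$ each consume one of the at most $(2r+1)|S|$ edges leaving $S$; (iii) the remaining ones, having no edge to $S$ and being odd with more than one edge to $T$, send at least three edges into $T$. Bounding $q(S,T)$ is then a packing problem: the $(2r+1)|T|-2e_G(T)-e_G(S,T)$ edges from $T$ to the components must be distributed so that type-(iii) components cost three $T$-edges each, while type-(i) components cost only one but are capped by the bridge budget, and --- crucially --- a leftover single edge cannot start a new cheap component without violating that cap. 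The extremal configuration is $S=\emptyset$, $T=\{t\}$: use all $(2r+1-k)/k$ admissible bridges and pack the remaining edges of $t$ into odd components of size at least three; the arithmetic remainder (a stray edge must be absorbed into an even component rather than forming an extra bridge) together with $r\ge(3k-1)/2$ forces $q\le 2r+1-2k$, with equality at the boundary $2r+1=3k$. Carrying the same accounting out for general $S,T$ gives $q(S,T)\le 2k|S|+(2r+1-2k)|T|-e_G(S,T)$, i.e.\ $\eta(S,T)\ge 0$.

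I expect the main obstacle to be precisely this last optimization. The naive estimate that credits each surplus odd component with only two extra edges is too weak --- it already fails for $S=\emptyset$, $|T|=1$ --- so the count must simultaneously track the edges absorbed by $S$, the bridge budget, and the integrality fact that no odd component can be cheaper than a single bridge and that leftover edges cannot disappear. Balancing these three ingredients against $2k|S|+(2r+1-2k)|T|$ so that the two numerical hypotheses come out exactly tight is the delicate step; by comparison, the reduction to a connected graph, the disposal of the range $|S|\ge|T|$, and the verification of the extremal single-vertex configuration are routine.
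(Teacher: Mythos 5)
First, a point of context: the paper does not prove this statement at all --- Theorem~\ref{odd2factor} is quoted from Hanson, Loten and Toft~\cite{HanLotTof98} and is used only to remark that Nakprasit's theorem follows from it together with Petersen's theorem. So there is no in-paper proof to compare your argument against, and your proposal has to stand on its own.

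On its own terms, your setup is sound: Tutte's $f$-factor criterion with $f\equiv 2k$ is the standard route to such results, your reduction of $q(S,T)$ to components with $e_G(C,T)$ odd is correct, the disposal of the range $|S|\ge|T|$ via $e_G(S,T)+q(S,T)\le(2r+1)|T|$ is correct, and your analysis of $S=\emptyset$, $|T|=1$ (bridge budget plus the ``stray edge'' parity argument) can indeed be made rigorous. The genuine gap is the sentence ``Carrying the same accounting out for general $S,T$ gives $q(S,T)\le 2k|S|+(2r+1-2k)|T|-e_G(S,T)$.'' That inequality \emph{is} the theorem, and the three ingredients you list do not combine to give it. Concretely: your type-(ii) bound charges odd components meeting $S$ against the at most $(2r+1)|S|-e_G(S,T)-2e_G(S)$ edges leaving $S$, i.e.\ up to $2r+1$ components per vertex of $S$, whereas the target allows only $2k|S|$ on that account and $2r+1\ge 3k>2k$; already for $|T|=|S|+1$ with $|S|$ large, adding your three estimates yields an upper bound on $e_G(S,T)+q(S,T)$ that exceeds $2k|S|+(2r+1-2k)|T|$ by roughly $(2r+1-2k)|S|$. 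Equivalently, the surplus count $\sum_C e_G(C,T)-q(S,T)\ge 2c$ (two extra $T$-edges per type-(iii) component) would need to be a \emph{lower} bound of order $k(|T|-|S|)$ on a quantity you cannot control. So the decisive optimization is not merely ``delicate,'' as you say --- it is absent, and the single-vertex accounting does not extend by the same bookkeeping. A correct completion has to charge each odd component against a weighted combination of its edges to $S$ and to $T$ simultaneously (in the style of Bollob\'as--Saito--Wormald or of the original Hanson--Loten--Toft argument), not against the two edge budgets separately; until that per-component charging scheme is written down and verified, including where the hypotheses $r\ge(3k-1)/2$ and the cut-edge bound enter with equality, the proof is incomplete at its central step.
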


\begin{proof}[Short proof of Theorem~\ref{nakprasit}.]
	We proceed in two steps. Firstly, applying an induction on the maximum degree of a graph, and secondly using a contradiction by showing that
	the minimal counterexample admits the desired coloring.
	
	Let $G$ be a minimal biregular counterexample to the theorem in terms of the number of vertices. Since every bipartite graph 
	is a subgraph of a biregular graph, we consider only biregular graphs. Moreover, bridges in bipartite graphs with maximum degree in 
	one of the partition sets at most $2$ are reducible for a strong edge coloring with at most $2 \Delta$ colors. 
	Hence, the graph $\hat{G}$, $G$ with all the $2$-vertices suppressed, has a $2$-factor $\hat{F}$. Let $F$ be the set of edges corresponding to the edges
	of $\hat{F}$. Then, by induction, $G - F$ admits a strong edge coloring $\varphi_1$ with at most $2\Delta - 4$ colors, 
	furthermore, every cycle in $F$ admits a strong edge coloring with at most $4$ colors. Notice that the edges of the two cycles in $F$ 
	are at distance at least $3$ in $G$, 
	hence there is a strong edge coloring $\varphi_2$ of $F$ with at most $4$ colors. Thus, $\varphi_1$ and $\varphi_2$ induce a strong edge coloring of $G$
	with at most $2\Delta$ colors, a contradiction.
\end{proof}

There are several further open problems regarding subcubic bipartite graphs.
Faudree et al.~\cite{FauGyaSchTuz90} considered also subcubic bipartite graphs and suggested the problems listed below:
\begin{conjecture}[Faudree et al., 1991]
	\label{conj:cubbip}
	Let $G$ be a subcubic bipartite graph. Then
	\begin{itemize}
		\item[$(a)$] $\chi^\prime_s(G) \le 6$, if for each edge $uv \in E(G)$, $d(u)+d(v) \le 5$;
		\item[$(b)$] $\chi^\prime_s(G)\le 7$, if $G$ is $C_4$-free;
		\item[$(c)$] $\chi^\prime_s(G)\le 5$, if the girth of $G$ is large enough.
	\end{itemize}
\end{conjecture}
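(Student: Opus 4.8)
The plan is to handle the three parts by a common template, taking a minimal counterexample and reducing the colouring problem to a controlled extension, while spending the defining hypothesis of each part. The count that drives everything is that two edges must receive distinct colours exactly when they lie at distance at most two, and in a subcubic graph an edge $uv$ has at most $(d(u)-1)+(d(v)-1)\le 4$ neighbouring edges and at most $2(d(u)-1)+2(d(v)-1)\le 8$ edges at distance two, hence at most $12$ edges constraining it. A naive greedy colouring therefore uses up to $13$ colours, so in every part the real content is to use the structure ($C_4$-freeness, the degree-sum bound, or large girth) to show that far fewer colours are ever simultaneously forbidden. In each part I would first pass to a connected graph and delete degree-$1$ vertices, which the relevant hypothesis makes reducible.

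For part $(b)$ I would bootstrap from the $\Delta=3$ instance of Conjecture~\ref{conj:2Delta} (the paper's main theorem), namely that a $C_4$-free bipartite graph with $\Delta(X)=2$ and $\Delta(Y)=3$ has $\chi'_s\le 5$. Starting from a $C_4$-free subcubic bipartite graph $G$ with parts $X,Y$, I would delete one edge at each $3$-vertex of $X$, collecting the deleted edges into a set $E_2$; the remaining graph $G-E_2$ is $C_4$-free with $\Delta(X)\le 2$, so it receives a strong edge colouring with the five colours $1,\dots,5$. Since the edges of $E_2$ may then use the two fresh colours $6,7$, they only constrain one another, so the task reduces to properly $2$-colouring the auxiliary graph whose vertices are the edges of $E_2$ and whose adjacencies record distance at most two in $G$; here $C_4$-freeness is what keeps distinct edges of $E_2$ from accumulating common neighbours, and the delicate point is to choose $E_2$ (and possibly to recolour a few of the first five classes) so that this auxiliary graph is bipartite. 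Part $(a)$ is a variant in which $d(u)+d(v)\le 5$ forces the $3$-vertices to form an independent set; then every edge with a $3$-endpoint is constrained by at most $3+4=7$ other edges, pendant and other light configurations are reducible, and the independence of the $3$-vertices supplies enough low-degree vertices that a minimal counterexample cannot avoid a reducible configuration over $6$ colours.

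For part $(c)$ the target drops to $5$, exactly the bound $2\Delta-1$ attained by subcubic trees, and this equality is the crux. The guiding fact is that when the girth exceeds $2r$ every ball of radius $r$ is a tree, and subcubic trees have strong chromatic index at most $5$. The plan is to take a minimal counterexample $G$, reduce away degree-$1$ vertices and long induced paths of degree-$2$ vertices, and then argue that in what remains the large girth makes the neighbourhood of any edge tree-like enough that a strong $5$-colouring of a large ball can be fitted to any colouring of the rest. The genuine difficulty is the $3$-regular case: a $3$-regular graph of large girth has no degree-$2$ vertex, so the standard light configurations are unavailable, and one must instead exploit the local tree structure directly, using the girth to guarantee that recolouring the edges inside a ball never propagates to the already-coloured edges outside it.

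I expect the decisive obstacle in each part to be the extension step rather than the reductions. For $(a)$ and $(b)$ it is the control of the pairwise conflicts among the leftover edges, so that the one or two remaining colours genuinely suffice; this is precisely where $C_4$-freeness and the degree-sum bound must be spent, and in $(b)$ ensuring the auxiliary conflict graph on $E_2$ is bipartite is the heart of the matter. For $(c)$ it is ruling out a $3$-regular counterexample and pinning down an explicit girth threshold, thereby turning the purely local tree bound $\chi'_s\le 5$ into a global colouring.
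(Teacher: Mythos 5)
First, note that the statement you were given is a \emph{conjecture}, and the paper itself establishes only part $(a)$ of it (as Theorem~\ref{thm:str:weight}); parts $(b)$ and $(c)$ are left open there. Your text is a programme rather than a proof: for $(b)$ you concede that the ``delicate point is to choose $E_2$ \dots\ so that this auxiliary graph is bipartite'', and for $(c)$ that one must still ``rule out a $3$-regular counterexample and pin down an explicit girth threshold''. Those two admissions are precisely the unsolved content of parts $(b)$ and $(c)$, so nothing is established for them. In particular, for $(b)$ you give no reason why the conflict graph on $E_2$ should be $2$-colourable: even in a $C_4$-free subcubic bipartite graph, the deleted edges at distinct $3$-vertices of $X$ can pairwise lie at distance two, so without a very careful (and unspecified) choice of $E_2$ its conflict graph need not even have small odd girth or bounded degree, and the opening greedy count (at most $12$ conflicting edges) does not bring the bound $7$ within reach.

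For part $(a)$, the only part the paper actually proves, your sketch also misses the decisive reduction. The paper's argument takes a minimal counterexample $G$ and verifies three structural claims: $\delta(G)=2$, no two $2$-vertices are adjacent, and $G$ contains no $4$-cycle; the second and third claims are proved by explicit local recolouring/extension arguments around a pair of adjacent $2$-vertices and around a $4$-cycle. The payoff is that the hypothesis $d(u)+d(v)\le 5$ forces every neighbour of a $3$-vertex to be a $2$-vertex, and the absence of adjacent $2$-vertices then forces one side of the bipartition to consist entirely of $2$-vertices and the other entirely of $3$-vertices; together with $C_4$-freeness this places $G$ in $B_{2,3}$ without $4$-cycles, so Theorem~\ref{thm:str:bip23} gives $\chi_s'(G)\le 5\le 6$, a contradiction. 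Your sketch correctly notes that the $3$-vertices form an independent set, but then appeals to unspecified ``reducible configurations over $6$ colours'' instead of these concrete claims, and you never state the reduction of part $(a)$ to Theorem~\ref{thm:str:bip23} --- you invoke that theorem only in your (incomplete) plan for $(b)$. So the idea that actually closes the one provable case is present in your write-up but attached to the wrong part, and the case analysis that would justify the structural claims is absent.
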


In this paper we solve Conjecture~\ref{conj:2Delta} for the case of subcubic graphs and the case $(a)$ of Conjecture~\ref{conj:cubbip}.

\bigskip
Throughout the paper, we mostly use the terminology used in~\cite{BonMur08}. We refer to vertices of degree $k$ as \textit{$k$-vertices},
and to cycles of length $k$ as \textit{$k$-cycles}. 
We say that a pair of vertices of a graph $G$ is \textit{$2$-adjacent} if they have a common neighbor in $G$.
A \textit{$2$-neighbor} of a vertex $v$ is any vertex at distance at most $2$ from $v$. Similarly,
a $2$-neighbor of an edge $e$ is any edge at distance at most $2$ from $e$.
By $B_{2,\Delta}$ we denote the set of all bipartite graphs with maximum degrees of partite sets bounded by $2$ and $\Delta$, respectively.

\section{Subcubic bipartite graphs without $4$-cycles}

In this section we consider subcubic bipartite graphs without $4$-cycles.
In particular, we prove that Conjecture~\ref{conj:2Delta} holds if $\Delta=3$. In the proof of the main theorem of this section 
we will make use of the following results.
\begin{lemma}
	\label{lem:str:cover}
	Let $G$ be a graph without isolated vertices. Then, there exists a set of disjoint stars $\mathcal{S}$ whose edges cover all the vertices of $G$, i.e.,
	every vertex in $G$ is a vertex of exactly one star $S$ in $\mathcal{S}$.
\end{lemma}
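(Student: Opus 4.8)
The plan is to construct the desired star partition around a maximal matching. First I would fix a maximal matching $M$ of $G$ and split $V(G)$ into the set $V(M)$ of matched vertices and the set $U=V(G)\setminus V(M)$ of unmatched ones. Since $G$ has no isolated vertices, every $v\in U$ has at least one neighbor, and that neighbor cannot also lie in $U$, for otherwise the corresponding edge could be added to $M$, contradicting maximality. Hence every unmatched vertex has all of its neighbors in $V(M)$, and in particular $U$ is an independent set. This is exactly the structural fact that lets each vertex of $U$ be attached to a matched vertex so as to become a leaf of some star.

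Next I would build the partition edge by edge. Assign to each $v\in U$ one fixed neighbor $\mu(v)\in V(M)$, and for a matching edge $xy\in M$ write $A_x=\{v\in U:\mu(v)=x\}$ and $A_y=\{v\in U:\mu(v)=y\}$ for the pendants routed to its two endpoints. I then form stars locally: if at most one of $A_x,A_y$ is nonempty, say $A_y=\emptyset$, I keep the edge $xy$ and take the single star centered at $x$ with leaf set $A_x\cup\{y\}$ (when both are empty this star is simply the edge $xy$); if both $A_x$ and $A_y$ are nonempty, I discard $xy$ and take the two stars centered at $x$ and $y$ with leaf sets $A_x$ and $A_y$, respectively. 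In every case each created star contains at least one edge, and since the pairs $\{x,y\}$, $xy\in M$, partition $V(M)$ while $\mu$ sends each vertex of $U$ to exactly one such endpoint, the resulting stars are pairwise vertex-disjoint and together cover all of $V(G)$.

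The one point that needs care — and the only place where the naive idea breaks — is precisely the situation in which both endpoints of a matching edge receive pendant vertices: there a single star cannot simultaneously contain $x$, $y$, $A_x$, and $A_y$, so I must sacrifice the matching edge $xy$ and produce two stars instead. The nonemptiness of both $A_x$ and $A_y$ in that branch guarantees that each of the two stars still carries an edge, so no isolated part is ever created and the partition stays valid. The same conclusion can be reached by induction on $|V(G)|$ or by reducing to a spanning forest and treating each tree separately, but the maximal-matching argument avoids any case analysis on tree structure and makes the covering and disjointness properties immediate.
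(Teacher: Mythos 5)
Your argument is correct, and it is a genuinely different route from the one the paper takes. The paper disposes of this lemma in one sentence: take a spanning tree (or forest) and recursively delete the middle edge of every path of length $3$, or, equivalently, observe that any \emph{minimal} edge set covering all vertices is automatically a disjoint union of stars. Your construction instead starts from a maximal matching $M$, uses the standard observation that the unmatched vertices form an independent set whose neighbors all lie in $V(M)$, routes each unmatched vertex to a chosen matched neighbor, and then resolves the only problematic configuration --- a matching edge $xy$ with pendants attached to both ends --- by discarding $xy$ and splitting into two stars. This is essentially the classical Gallai-type passage from a maximal matching to an edge cover, and all the claimed properties (each star has an edge, the stars are vertex-disjoint, every vertex is covered exactly once) do follow as you say. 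What your approach buys is a fully explicit, self-contained construction with the disjointness built in by design; what the paper's approach buys is brevity and the slightly slicker structural fact that minimality of an edge cover alone forces the star structure (no component of a minimal cover can contain a path on four vertices, since its middle edge would be redundant). One cosmetic remark: in the case where both $A_x$ and $A_y$ are empty you keep the single edge $xy$ as a star $K_{1,1}$; this is consistent with the paper's usage, which also admits single edges as degenerate stars, so no adjustment is needed.
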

A cover described in Lemma~\ref{lem:str:cover} can be obtained e.g. by recursively removing the middle edge of every path 
of length $3$ in a spanning tree of a graph. Note that each minimal set of edges which covers the vertices of $G$ is of desired type.
The following lemma is a corollary of Petersen's theorem that every cubic graph with at most one bridge has a $1$-factor~\cite{Pet1891}.
\begin{lemma}[Petersen, 1891]
	\label{lem:str:pet}
	Let $G$ be a simple subcubic graph without bridges and with at most one vertex of degree at most two. 
	Then, there is a matching that covers all $3$-vertices.
\end{lemma}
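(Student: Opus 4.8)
The plan is to reduce the statement to the cubic case and invoke Petersen's theorem that a cubic graph with at most one bridge has a $1$-factor~\cite{Pet1891}. First I would dispose of the case where $G$ is already $3$-regular: since $G$ is then cubic and bridgeless, Petersen's theorem yields a perfect matching $M$, and $M$ trivially covers every $3$-vertex. So from now on $G$ has exactly one vertex $v$ with $d(v)\le 2$, and every other vertex is a $3$-vertex.

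The first observation in this remaining case is that $v$ cannot have degree $1$, because the unique edge incident to a $1$-vertex is a bridge, contradicting the hypothesis. If $d(v)=0$, then $v$ is isolated and I simply apply the cubic case to $G-v$, whose perfect matching covers all $3$-vertices of $G$ (the vertex $v$ is not a $3$-vertex and needs no covering). Thus the essential case is $d(v)=2$.

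For $d(v)=2$ I would build an auxiliary cubic graph $\hat G$ from two disjoint copies $G_1,G_2$ of $G$ by adding a single edge $v_1v_2$ joining the two images of $v$. Since each $v_i$ had degree $2$, it now has degree $3$, while all other vertices keep degree $3$, so $\hat G$ is cubic and simple. The key point is to control its bridges: as $G$ is bridgeless, no edge inside either copy is a bridge of $\hat G$, and adding $v_1v_2$ creates no new bridge inside a copy, so the only bridge of $\hat G$ is $v_1v_2$ itself. Hence $\hat G$ (treating its components separately if $G$ is disconnected) satisfies the hypothesis of Petersen's theorem and admits a perfect matching $\hat M$. Restricting it to the first copy, the matching $M=\hat M\cap E(G_1)$ covers every vertex of $G_1$ except possibly $v_1$, which happens exactly when $v_1v_2\in\hat M$; but $v_1$ is a $2$-vertex, so $M$ still covers all $3$-vertices of $G_1\cong G$, as required.

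The step I expect to need the most care is verifying that $\hat G$ has at most one bridge, so that Petersen's theorem is applicable; this is precisely where the bridgelessness of $G$ is used, and where a careless gadget — for instance attaching a pendant edge at $v$ to raise its degree — would fail by introducing a new bridge.
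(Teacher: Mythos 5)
Your argument is correct and follows exactly the route the paper intends: the paper states this lemma as an (unproved) corollary of Petersen's theorem that a cubic graph with at most one bridge has a $1$-factor, and your two-copy gadget joining the images of the single $2$-vertex is a sound way to fill in that reduction, with the bridge analysis and the case split on $d(v)\in\{0,1,2\}$ handled properly. No issues to report.
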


Notice that since there is always an even number of odd vertices in a graph, an eventual $2$-vertex from the above lemma is never covered.
Now, we prove a theorem that confirms Conjecture~\ref{conj:2Delta} for subcubic graphs. 
\begin{theorem}
	\label{thm:str:bip23}
	Let $G \in B_{2,3}$ be a graph without $4$-cycles. Then
	$$
		\chi_s'(G) \le 5\,.
	$$	
\end{theorem}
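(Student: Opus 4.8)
The plan is to reformulate the problem, reduce to a well-structured minimal counterexample, and then decompose its edge set using the matching supplied by Lemma~\ref{lem:str:pet}. A strong edge coloring with five colors is a proper coloring of the square of the line graph $L(G)^2$; since an edge $xy$ with $x\in X$ of degree $2$ and $y\in Y$ of degree $3$ can have as many as seven edges within distance two, the trivial greedy bound gives eight colors, so the $4$-cycle-free structure must be exploited. The key device is the subcubic graph $\hat{G}$ on the vertex set $Y$ obtained by suppressing every $2$-vertex of $X$: each degree-$2$ vertex $x\in X$ with neighbors $u,v\in Y$ becomes an edge $uv$ of $\hat{G}$, and the absence of $4$-cycles in $G$ is exactly the condition that $\hat{G}$ be simple. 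Under this correspondence the two edges of $G$ incident to such an $x$ are precisely the pairs that must be handled together, and two edges of $G$ lie within distance two if and only if the corresponding objects of $\hat{G}$ interact locally; this turns the task into a five-coloring problem carried by the subcubic graph $\hat{G}$.

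First I would pass to a counterexample $G$ minimizing the number of edges and clear away the easy local configurations. If $x\in X$ has degree $1$, the pendant edge $xy$ has at most $4$ edges within distance two (at most $2$ further edges at $y$, and at most one further edge at each of their other endpoints), so it can be colored after applying minimality to $G-xy$; hence every vertex of $X$ has degree exactly $2$. A similar count (giving at most $3$ forbidden colors) removes vertices of $Y$ of degree $1$, and, as noted in the short proof of Theorem~\ref{nakprasit}, bridges are reducible as well, so $\hat{G}$ may be assumed bridgeless. The remaining reduction---suppressing the degree-$2$ vertices of $Y$, i.e.\ contracting the corresponding length-four paths of $G$---must be carried out so as not to create a $4$-cycle (equivalently, a multiple edge of $\hat{G}$); after this I would assume $\hat{G}$ is a bridgeless cubic graph, or at worst retains a single vertex of degree at most two so that Lemma~\ref{lem:str:pet} still applies.

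With $\hat{G}$ bridgeless and cubic, Lemma~\ref{lem:str:pet} yields a perfect matching $M$, and $\hat{F}=\hat{G}-M$ is a $2$-factor, i.e.\ a disjoint union of cycles. Translating back to $G$, the edges corresponding to $M$ form vertex-disjoint paths $u\,x\,v$ of length two that cover $Y$, while the edges corresponding to $\hat{F}$ form vertex-disjoint \emph{even} cycles that also cover $Y$; each vertex of $Y$ lies on exactly one such cycle (with two incident edges) and on exactly one such path (with one incident edge). I would then color the matching part first and extend to the cycles, using the star cover of Lemma~\ref{lem:str:cover} to group the cycle edges so that they can be assigned colors consistently with the already-colored path edges and with one another.

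The main obstacle is this final simultaneous coloring rather than any single reduction. Around a vertex $y\in Y$ its two cycle edges, its path edge, and their distance-two neighbors form a set of up to seven mutually constrained edges competing for five colors, so a naive edge-by-edge extension along a cycle stalls: a cycle edge already sees up to four other cycle edges and three pre-colored edges. Overcoming this requires a genuinely global assignment---treating each cycle together with the matching edges hanging off it, and distinguishing cases according to the cycle lengths, with short cycles and the way distinct cycles share matching edges being the delicate points. Ensuring that the earlier contractions never introduce a $4$-cycle is a second, more routine, source of case analysis that must nonetheless be tracked carefully throughout.
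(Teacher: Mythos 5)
Your setup coincides with the paper's: a minimal counterexample, the reduction of pendant edges and bridges, the suppression of the $2$-vertices of $X$ to obtain a simple bridgeless subcubic graph $\hat{G}$, the matching from Lemma~\ref{lem:str:pet} whose removal leaves a union of cycles, and the star cover of Lemma~\ref{lem:str:cover} to organize how the cycles are joined. But the proposal stops exactly where the paper's proof begins in earnest: you state that the ``final simultaneous coloring'' is the main obstacle, describe what a solution would have to accomplish, and do not supply it. That simultaneous coloring is the entire content of the theorem --- the decomposition alone gives nothing, since (as you yourself compute) a greedy extension sees up to seven conflicting edges. The paper's resolution is concrete: color the (subdivided) cycle edges by the repeating pattern $1,2,3$ with a carefully chosen starting edge (chosen relative to a subdivided chord in the one-cycle case, or relative to a pair of adjacent leaf cycles otherwise), uncolor a specific small set of cycle edges, use colors $1,2,3$ again on the star edges $S$ and on the leaf cycles, and then prove that the conflict graph $R(G)$ on the remaining uncolored edges has maximum degree $2$ and no odd cycle, so two further colors finish the job. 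None of the delicate choices that make $R(G)$ bipartite (e.g.\ preferring a chord that does not cut off a $6$-cycle) appear in your outline, so the proof is not complete.

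Two smaller points. First, you propose to additionally suppress the degree-$2$ vertices of $Y$ so that $\hat{G}$ becomes cubic; the paper does not do this, and it is not a harmless operation --- contracting a length-four path of $G$ changes distance-two relations and does not obviously preserve strong $5$-colorability. The paper instead shows, via the minimality assumption (two nonadjacent $2$-vertices of $\hat{G}$ could be joined by a subdivided edge, and two adjacent ones can be deleted and the coloring extended), that $\hat{G}$ has at most one $2$-vertex, which is exactly the hypothesis Lemma~\ref{lem:str:pet} tolerates; you would need an argument of this kind rather than a contraction. Second, you plan to color the matching edges first and extend to the cycles, which is the reverse of the paper's order; since the cycle edges are the ones that admit the rigid periodic pattern, reversing the order would require its own (unsupplied) justification.
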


The bound in Theorem~\ref{thm:str:bip23} is tight. Consider for an example the Petersen's graph with all edges subdivided in Fig.~\ref{fig:peter}.
It does not admit a strong edge coloring with at most $4$ colors, since it contains a $10$-cycle with a pendent edge on every second vertex, for which
it is not hard to see that it is the only cycle with pendent edges in $B_{2,3}$ that needs at least $5$ colors for a strong edge coloring.
\begin{figure}[ht]
	$$
		\includegraphics{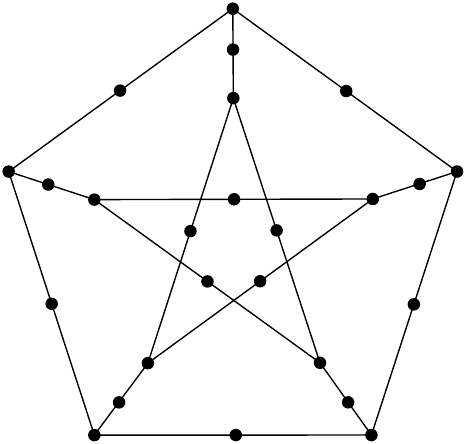}
	$$
	\caption{Petersen's graph with all edges subdivided is bipartite subcubic graph with girth $10$ and strong chromatic index equal to $5$.}
	\label{fig:peter}
\end{figure}

\begin{proof}
Suppose, to the contrary, that the theorem is false. Let $G$ be a minimal counterexample to the theorem with the bipartition $(X,Y)$ and $\Delta(X) = 2$.
By minimal we mean the graph with the minimum number of vertices in $Y$ and, among such, minimum number of $2$-vertices. 

Now we consider some structural properties of $G$. Obviously, $G$ is connected. Suppose first that there is a bridge $uv$ in $G$. 
Let $G_u$ and $G_v$ be the two components of $G - uv$ containing the vertices $u$ and $v$, respectively. 
By the minimality of $G$, there exist strong edge colorings $\varphi_1$ and $\varphi_2$ of $G_u$ and $G_v$ with at most $5$ colors, respectively. 
It is easy to verify that there exists a permutation of colors in $\varphi_1$ such that $\varphi_1$ and $\varphi_2$ induce a strong edge coloring of $G - uv$
and there is an available color for $uv$. Therefore, $G$ is bridgeless and the minimum degree of $G$ is $2$, as the edge incident to a $1$-vertex is a bridge.

Denote by $\hat{G}$ the graph obtained from $G$ by removing every $2$-vertex from $X$ and connecting its two neighbors by an edge. 
Since $G$ has no $4$-cycles, $\hat{G}$ is a simple subcubic graph. Moreover, since $G$ is bridgeless, so is $\hat{G}$.

Suppose that $\hat{G}$ contains two $2$-vertices, $x$ and $y$. Then, $x$ and $y$ are adjacent in $\hat{G}$ for otherwise
we may connect them by a subdivided edge in $G$ and so decrease the number of $2$-vertices in $G$ (what would contradict the minimality assumption). 
But in this case, we color the edges of the graph $G \setminus \set{x,y}$ strongly
with at most $5$ colors and then extend the coloring to $G$ (such an extension is trivial and we leave it to the reader).
Hence, we may assume that $\hat{G}$ contains at most one $2$-vertex. 

By Lemma~\ref{lem:str:pet}, there exists a matching $\hat{M}$ covering all the $3$-vertices of $\hat{G}$.
By removing all the edges of the matching $\hat{M}$ we obtain a $2$-regular graph, i.e., $\hat{C} = \hat{G} \setminus \hat{M}$ is a union of cycles.
Let $G^\star$ be the graph obtained from $\hat{G}$ by contracting all the cycles in $\hat{C}$, i.e., 
we identify all the vertices of each cycle and remove the edges of the cycles. 
Hence, $G^\star$ is a connected multigraph with possible loops. By Lemma~\ref{lem:str:cover}, there exists a set of disjoint stars $S^\star$ in $G^\star$ 
whose edges cover all the vertices of $G^\star$ (in case when $|V(G^\star)| = 1$, $S^\star = \emptyset$). Let $\hat{S} \subseteq \hat{M}$ be the set of edges in $\hat{G}$ corresponding to the edges of $S^\star$.
We call a cycle in $\hat{G}$ associated to a central vertex of a star in $S^\star$ a \textit{central cycle}, and similarly, we refer to a cycle
in $\hat{G}$ corresponding to a leaf of a star in $G^\star$ as a \textit{leaf cycle}.

Now, consider $G$ again. Every edge of $\hat{M}$, $\hat{C}$, and $\hat{S}$ is subdivided once in $G$; denote by $M$, $C$, and $S$ the corresponding sets of edges in $G$.
In the sequel we describe how a strong edge coloring of $G$ with at most $5$ colors can be constructed. Firstly, we color almost all edges of the cycles in $C$ 
and all edges in $S$ with the colors $1$, $2$, and $3$ and show that the remaining noncolored edges in $G$ can be colored by at most two additional colors. 
In particular, after coloring the edges with the colors $1$, $2$, and $3$, we consider the graph $R(G)$ whose vertices are yet noncolored edges of $G$, 
and two vertices are connected if the corresponding edges are at distance at most $2$ in $G$. We refer to $R(G)$ as a \textit{conflict graph}. 

Obviously, a proper coloring of the vertices of $R(G)$ is a strong edge coloring of the corresponding edges in $G$.
Our aim is to show that the graph $R(G)$ is bipartite and hence $2$-colorable. 

Constructing the coloring, we consider two cases regarding the number of the cycles in $C$:
\begin{itemize}
	\item[$(a)$] \textit{The set $C$ contains exactly one cycle $C_0$.}\,  Let $v_1, v_2,\dots, v_k$ be the consecutive vertices of $C_0$ from the partition $Y$,
	i.e., the $3$-vertices of $G$ with an eventual $2$-vertex. Since $G$ is bipartite, the length of $C_0$ is $2k$.
	Choose the natural orientation of $C_0$ induced by the labeling of the vertices. We label the incoming and outgoing edge incident to a vertex 
	$v_t$, $1 \le t \le k$, as $e_t^+$ and $e_t^-$, respectively.
	
	Since $G$ is a minimal counterexample with respect to the number of $2$-vertices, $C_0$ has at least one subdivided chord in $M$, 
	represented by the edges $f_i$ and $f_j$, where $f_i$ is incident to $v_i$, for some $i, 1\le i\le k-2$, and $f_j$ is incident to $v_j$, 
	for some $j, i+2\le j\le k$. By a \textit{subdivided chord} we mean the two edges in $G$ corresponding to a chord of the cycle in $\hat{G}$.
	If possible, we always choose a subdivided chord which does not divide $C_0$ into two cycles, where one is of length $6$.
	
	Next, color the edges of $C_0$ repetitively by the colors $1,2,3$ starting with the edge $e_{i+1}^+$ and continue with the edge $e_{i+1}^-$. 
	If $2k$ is divisible by $3$, all the edges of $C_0$ are strongly colored. Notice that the conflict graph is a union of $\lfloor k/2 \rfloor$ 
	copies of $K_2$, and hence bipartite.
	
	So, suppose that $2k$ is not divisible by $3$. Uncolor the edges $e_i^+$, $e_i^-$, $e_j^+$ and $e_j^-$. The edge $f_i$ has
	at most two colored $2$-neighbors, hence we can color it with some color from $\set{1,2,3}$. Now we color $f_j$, which has three colored $2$-neighbors,
	$f_i$, $e_{j-1}^-$, and $e_{j+1}^+$. However, the edges $e_{j-1}^-$, and $e_{j+1}^+$ are assigned the same color, therefore we can color $f_j$ with some
	color from $\set{1,2,3}$ also. 
	
	Now consider the conflict graph of $G$. We show that it contains no cycle of odd length and hence its vertices are properly colorable with two 
	colors. In case when $C_0$ is an $8$-cycle with two subdivided chords, then $R(G)$ is comprised of two $4$-cycles with a common edge, and hence it is bipartite.
	
	In case when $C_0$ is of length at least $10$ and every its subdivided chord divides $C_0$ into a $6$-cycle and a cycle of higher length,
	then $R(G)$ is comprised of a $4$-cycle and a path with a common vertex, and a disjoint union of $\lfloor \frac{k-4}{2} \rfloor$ copies of $K_2$.
	Otherwise, if no subdivided chord divides $C_0$ into a $6$-cycle, $R(G)$ is comprised an $8$-cycle or a path on $10$ vertices, and some copies of $K_2$,
	hence, it is again bipartite.	
	\begin{figure}[ht]
		$$
			\includegraphics{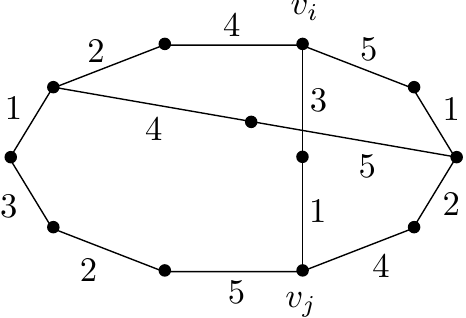}
		$$
		\caption{An example of a strong edge coloring in case when $G$ is comprised of only one cycle with some diagonals.}
		\label{fig:caseA}
	\end{figure}
	
	\item[$(b)$] \textit{The set $C$ contains at least two cycles.}\,  Consider an arbitrary star in $S^\star$ with a central vertex $c$ of degree $p$. 
	Let $C_c$ be the central cycle in $G$ corresponding to $c$. As in the previous case, let the length of $C_c$ be $2k$ and let $v_1,v_2,\dots, v_k$ be its 
	vertices from the partition $Y$. There are $p$ leaf cycles around $C_c$, for $1\le p\le k$. A leaf cycle of $C_c$ is called $C_i$ if one of the two edges
	of $S$, connecting $C_c$ and $C_i$, is incident to $v_i$. We say that the leaf cycles $C_i$ and $C_{i+1}$ are \textit{adjacent}. 	
	Furthermore, let $f_i$ and $g_i$ be the two edges of $S$ connecting the cycle $C_c$ and $C_i$ such that $f_i$ is incident to $v_i$ and consecutively 
	adjacent to $e_i^+$ and $e_i^-$, and $g_i$ is adjacent to two edges of $C_i$.

	As in the previous case, we color all the edges of $C_c$ by a repetitive sequence of colors $1$, $2$, and $3$. 
	If there exists at least one pair of adjacent leaf cycles $C_i$ and $C_{i+1}$, $1 \le i < k$, we start to color the edges of $C_c$ with 
	the edge $e_{i+1}^-$. Then, we uncolor the edge $e_i^-$ and every edge $e_j^+$, for every $j \neq i$ for which there is a leaf cycle $C_j$.	
	In case when there is no pair of adjacent leaf cycles around $C_c$, we start to color $C_c$ with the edge $e_{i+1}^+$, for an arbitrary $i$, and uncolor every edge 
	$e_j^+$ for which there is a leaf cycle $C_j$; if $2k$ is not divisible by $3$, we uncolor also the edge $e_i^-$. 

	Now we color the edges $f_j$ and $g_j$ for every leaf cycle $C_j$ of $C_c$. There is at least one available color in $\set{1,2,3}$ for the edge $f_j$, 
	since we can use the color of the adjacent uncolored edge, so we color $f_j$. The edge $g_j$ has at most two colored $2$-neighbors, hence we can color it also
	by one of the colors $1$, $2$, or $3$.
	Finally, we color the edges of the leaf cycles of $C_c$, except for the two edges adjacent to the edge $g_j$, alternately with the colors $1$, $2$, and $3$. 
	Since $g_j$ is already colored, we may have to permute the colors in order to obtain a partial strong edge coloring.
	
	We repeat the described procedure for every central vertex in $S^\star$ and obtain a partial strong edge coloring of $G$ with $3$ colors. It remains to show
	that the conflict graph $R(G)$ of the noncolored edges of $G$ is bipartite. It is easy to see that every component in the graph $N(G)$ induced by the noncolored edges of $G$
	is an edge (the uncolored edges on the central cycles) or a $2$-path (the edges of $M \setminus S$, the pairs of edges incident to the edges $g_i$, and the pairs of edges 
	incident to the edges $f_i$ and $f_{i+1}$ in cases of adjacent leaf cycles, or the pairs of edges incident to $f_i$ otherwise). 
	
	Notice that vertices in $R(G)$ corresponding to single edge components of $N(G)$ do not lie on any cycle in $R(G)$. Moreover, the maximum
	degree of $R(G)$ is at most $2$, therefore we have that $R(G)$ contains no odd cycle, and so it is bipartite. Thus, we can color the noncolored edges of $G$ 
	by two additional colors. By that, we showed that there exists a strong edge coloring of $G$ with at most $5$ colors, and so establish the theorem.
\end{itemize}
\end{proof}

\section{Subcubic bipartite graphs with bounded edge weights}

Using Theorem~\ref{thm:str:bip23}, we are able to solve the case $(a)$ of Conjecture~\ref{conj:cubbip} by Faudree et al..
\begin{theorem}
	\label{thm:str:weight}
	Let $G$ be a bipartite subcubic graph with $d(u)+d(v)\le5$ for every edge $uv$. Then
	$\chi_s'(G) \le 6$.
\end{theorem}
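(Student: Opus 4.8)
The plan is to reduce the statement to Theorem~\ref{thm:str:bip23} at the cost of a single extra colour. First I would dispose of the routine structural reductions exactly as in the proof of Theorem~\ref{thm:str:bip23}: I may assume $G$ is connected, and a bridge $uv$ splits $G$ into two smaller graphs that are coloured by minimality and reassembled after a suitable permutation of colours, so $G$ may be taken bridgeless with minimum degree $2$. The crucial observation is that the hypothesis $d(u)+d(v)\le 5$ forbids an edge between two $3$-vertices, so the set of $3$-vertices is independent; in particular every neighbour of a $3$-vertex has degree at most $2$.

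Writing $(X,Y)$ for the bipartition and $X_3$ for the $3$-vertices of $X$, I would next produce a matching $M$ saturating $X_3$. Consider the bipartite subgraph $B$ between $X_3$ and $N(X_3)\subseteq Y$: every vertex of $X_3$ has degree $3$ in $B$, while every vertex of $N(X_3)$ has degree at most $2$, being a neighbour of a $3$-vertex. Hence for each $S\subseteq X_3$ the edges leaving $S$ number $3|S|$ and land on vertices of degree at most $2$, so $2|N(S)|\ge 3|S|$ and thus $|N(S)|\ge |S|$; Hall's condition holds and $M$ exists. After deleting $M$ every vertex of $X$ has degree at most $2$, so $G-M\in B_{2,3}$.

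To close within the budget of $6$ I want $\chi_s'(G)\le \chi_s'(G-M)+1$ together with $\chi_s'(G-M)\le 5$, and this is exactly where the two obstacles sit. For the $+1$ the edges of $M$ must all receive one common colour, i.e. $M$ must be an \emph{induced} matching of $G$; for the bound $5$ I must be able to invoke Theorem~\ref{thm:str:bip23}, which forces $G-M$ to be free of $4$-cycles. The hard part is arranging both conditions simultaneously: I would strengthen the choice of $M$ so that, beyond saturating $X_3$, it is induced and meets every $4$-cycle of $G$, refining the Hall argument into a local analysis around each $3$-vertex and each $4$-cycle.

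The genuine difficulty is that these demands can clash in small dense pieces. Already $K_{2,3}$ satisfies all hypotheses, lies in $B_{2,3}$ (its degree-$2$ part playing the role of $X$, so $X_3=\emptyset$ and $M=\emptyset$), yet carries $4$-cycles through its degree-$2$ vertices that no induced matching can all destroy, and indeed $\chi_s'(K_{2,3})=6$. For such configurations I would not reduce to Theorem~\ref{thm:str:bip23} at all, but invoke Nakprasit's Theorem~\ref{nakprasit} directly, which yields $\chi_s'(G)\le 6$ for every graph in $B_{2,3}$. Splicing the two regimes together, the sparse part via the induced-matching reduction to the $5$-colour theorem and the unavoidable $4$-cycle-rich part via the $2\Delta$ theorem, is what I expect to be the main technical obstacle.
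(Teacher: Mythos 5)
Your reduction has the right target (Theorem~\ref{thm:str:bip23}) but the step that carries all the weight is never performed. The Hall argument only yields \emph{some} matching saturating $X_3$; it gives no control over whether that matching is induced in $G$ (two matched $3$-vertices $u_1,u_2$ with partners $y_1,y_2$ may well satisfy $y_1\sim u_2$, putting the two matching edges at distance $2$), and no control over the $4$-cycles of $G-M$. You acknowledge that arranging both properties simultaneously is ``the main technical obstacle,'' but that obstacle \emph{is} the theorem: no construction is offered, and your fallback does not cover the cases where it fails. Nakprasit's Theorem~\ref{nakprasit} applies only to graphs in $B_{2,3}$, whereas a graph satisfying $d(u)+d(v)\le 5$ can have $3$-vertices in \emph{both} partition classes (take a path $u\,a\,b\,v$ with $u$ and $v$ each given two further pendant neighbours), so $G$ itself need not lie in $B_{2,3}$; and applying Nakprasit to $G-M$ already spends all $6$ colours, leaving none for $M$. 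A further, independent flaw: even with an induced matching $M$ and a strong $5$-colouring of $G-M$, the inequality $\chi_s'(G)\le\chi_s'(G-M)+1$ does not follow. Two edges $au$ and $yc$ with $uy\in M$ are at distance $2$ in $G$ but may be far apart in $G-M$, so a strong edge colouring of $G-M$ need not remain strong when the edges of $M$ are reinserted; Theorem~\ref{thm:str:bip23} gives no colouring that is ``strong with respect to $G$.''

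For contrast, the paper does not delete an induced matching at all. It takes a minimal counterexample and proves three reducibility claims by local recolouring with $6$ colours: the minimum degree is $2$, there are no adjacent $2$-vertices, and there are no $4$-cycles (the last two are the technical heart, each a careful case analysis of the lists of available colours around the deleted configuration, with $K_{2,3}$ handled as an explicit exception). Since every $3$-vertex has only neighbours of degree at most $2$ and no two $2$-vertices are adjacent, every edge of the minimal counterexample joins a $2$-vertex to a $3$-vertex, so the degree classes form the bipartition, the graph lies in $B_{2,3}$ and is $C_4$-free, and Theorem~\ref{thm:str:bip23} gives $5$ colours directly --- a contradiction. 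If you want to salvage your approach, the work that must be done is precisely the existence proof for the induced, $C_4$-hitting matching together with a version of Theorem~\ref{thm:str:bip23} producing a colouring that stays strong in the larger graph; as written, the proposal does not establish the theorem.
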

Note that the bound is tight and is attained e.g. by the complete bipartite graph $K_{2,3}$.
\begin{figure}[ht]
	$$
		\includegraphics{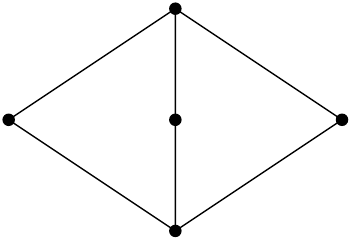}
	$$
	\caption{The strong chromatic index of $K_{2,3}$ is $6$.}
	\label{fig:str:weight}
\end{figure}

\begin{proof}
	Suppose, to the contrary, that $G$ is a counterexample to the theorem with the minimum number of vertices. We firstly discuss
	some structural properties of $G$. Notice that every $3$-vertex in $G$ has three neighbors of degree at most $2$.
	The proof will be an easy consequence of the following three claims:
	\begin{itemize}
		\item[$(a)$] \textit{the minimum degree $\delta(G) = 2$; and}
		\item[$(b)$] \textit{there are no adjacent $2$-vertices in $G$; and}
		\item[$(c)$] \textit{there are no $4$-cycles in $G$.}
	\end{itemize}
	We prove each of the claims separately. 
	\begin{itemize}
		\item[$(a)$] This property simply follows from the minimality of $G$.
		
		\item[$(b)$] Suppose first that $u$ and $v$ are adjacent $2$-vertices. By the minimality of $G$, the graph $G \setminus \set{u,v}$
		admits a strong edge coloring $\varphi$ with at most $6$ colors. Let $x$ and $y$ be the second neighbors of $u$ and $v$, respectively.
		We will show that $\varphi$ can always be extended to the edges $ux$, $uv$, and $vy$ and hence to $G$. 
		
		The number of colored $2$-neighbors of the edges $ux$, $uv$, and $vy$ is always at most $4$, hence there are two available colors 
		for each of them. In case when the union of the available colors of all three edges is at least three, $\varphi$ is easily extended. 
		Thus, we may assume that the same two colors, say $5$ and $6$, are available
		for all three edges. Notice that in this case $x$ and $y$ are $3$-vertices and the union of the colors of the colored edges incident to them
		is of size $4$. Consider the labeling of the vertices and the assignment of the colors as in Figure~\ref{fig:str:w1}.
			\begin{figure}[ht]
				$$
					\includegraphics{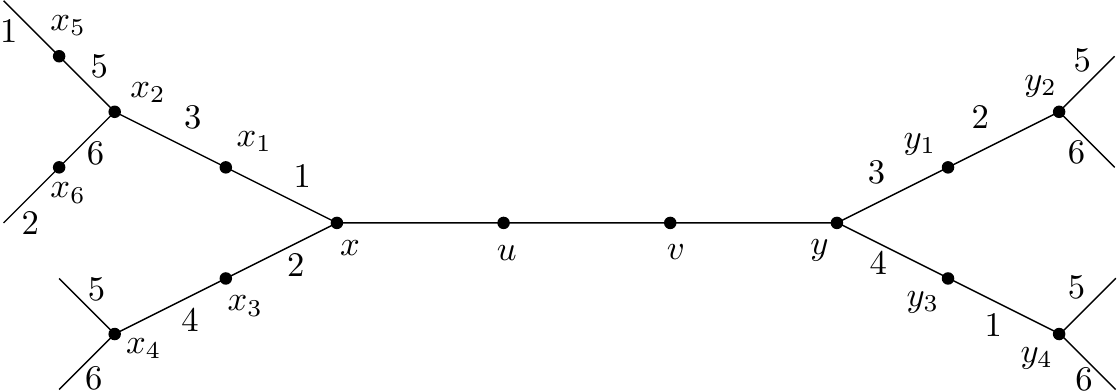}
				$$
				\caption{The labeling of the vertices together with a coloring of the edges in the neighborhoods of $u$ and $v$.}
				\label{fig:str:w1}
			\end{figure}				
			
		Observe that there is another possible nonisomorphic coloring of the edges where the colors of $y_1y_2$ and $y_3y_4$ are interchanged, however
		the argument is analogous. Notice also that the edges incident to the vertex $x_2$ (resp. $x_4$, $y_2$, $y_4$) distinct from $x_1x_2$
		(resp. $x_3x_4$, $y_1y_2$, $y_3y_4$) are colored by $5$ and $6$ for otherwise we color $xx_1$ (resp. $xx_3$, $yy_1$, $yy_3$) with $5$ 
		or $6$ infering that the three lists of available colors for $ux$, $uv$, and $vy$ are not equal. Observe that it immediately follows that 
		$x_2$, $x_4$, $y_2$, and $y_4$ are distinct vertices.
		
		Next, the two edges incident to $x_5$ and $x_6$ that are not incident to $x_2$, are colored by the colors $1$ and $2$, respectively. 
		Otherwise we interchange the color of the edges $x_1x_2$ and $xx_1$ or $xx_2$, respectively, again introducing nonequal sets of available colors
		for the three noncolored edges. However, in this setting, we are able to color $x_1x_2$ with the color $4$, and color the edges $ux$, $uv$, $vy$
		with the colors $3$, $5$, and $6$, respectively. Hence, we extended $\varphi$ to $G$, a contradiction.
		
		\item[$(c)$] Suppose that $C = uvwz$ is a $4$-cycle in $G$. By $(a)$ and $(b)$, precisely two vertices, say $u$ and $w$, are of degree $3$,
				while $v$ and $z$ are $2$-vertices. By the minimality, there exists a strong edge coloring $\varphi$ of $G\setminus \set{v,z}$
				with at most $6$ colors. It remains to color the four edges of $C$. Let $x$ and $y$ be the third neighbors of $u$ and $w$, respectively.
				In case when $x = y$, $G = K_{2,3}$, hence it is strongly edge colorable with at most $6$ colors.
				Otherwise, each of the noncolored edges has three colored $2$-neighbors and there are three available colors for each of them. 
				In case when the sets of available colors are not equal, $\varphi$ is easily extended. Hence, we consider the case when the same 
				colors are forbidden for the edges of $C$.
								
				Without loss of generality, we may assume that the color of $ux$ is $1$, and the second edge incident to $x$ is colored by $2$.
				Similarly, let $wy$ be colored with $3$ and the second edge $yy_1$ incident to $y$ must be colored with $2$. Our aim is to change
				the color of $yy_1$ in order to obtain nonequal sets of available colors of the edges of $C$. Since there are at most five 
				$2$-neighbors of $yy_1$, each of them must be colored by different color, otherwise, there is another color available for $yy_1$.
				But in this case, we recolor $wy$ with $2$ and $yy_1$ with $3$. Then we can color $uv$, $vw$, $wz$, $uz$ with $3$, $4$, $5$, and $6$,
				respectively. Hence, we extended $\varphi$ to $G$, a contradiction.
		\end{itemize}
	
	By the above claims, $G$ is a subcubic bipartite graph without $4$-cycles. Moreover, since there is no pair of adjacent $2$-vertices in $G$, it follows
	that the vertices of one partition are all of degree $3$, and the vertices in the second partition are all $2$-vertices. Such graphs,
	by Theorem~\ref{thm:str:bip23}, admit a strong edge coloring with at most $5$-colors, a contradiction.
\end{proof}

\paragraph{Acknowledgement.}
	The authors where supported in part by bilateral project SK-SI-0005-10 between Slovakia and Slovenia,
	by Science and Technology Assistance Agency under the contract No. APVV-0023-10 (R.~Sot\'{a}k), by Slovak VEGA 
	Grant No.~1/0652/12 (M.~Mockov\v{c}iakov\'{a}, R. Sot\'ak), and VVGS UPJ\v{S} No.~59/12-13 (M.~Mockov\v{c}iakov\'{a}), and
	by ARRS Program P1-0383 and Creative Core FISNM-3330-13-500033 (B. Lu\v{z}ar, R. Sot\'ak, R. \v{S}krekovski).

\end{document}